\newtheorem{theorem}{Theorem}[section]
\newtheorem{proposition}[theorem]{Proposition}
\newtheorem{definition}[theorem]{Definition}
\newtheorem{example}[theorem]{Example}
\newtheorem{lemma}[theorem]{Lemma}
\newtheorem{corollary}[theorem]{Corollary}
\DeclareMathOperator{\convo}{\xrightarrow[]{o}}
\DeclareMathOperator{\convr}{\xrightarrow[]{ru}}
\DeclareMathOperator{\convn}{\xrightarrow[]{\|\cdot\|}}
\renewcommand{\subsection}{\@startsection{subsection}{1}
{0pt}{3.25ex plus 1ex minus.2ex}{-1em}{\normalfont\normalsize\bf}}
\begin{document}

\title{{\bf Relatively uniformly continuous semigroups on ordered vector spaces}}

\author{Eduard Emelyanov$^{1}$, Nazife Erkur\c{s}un-\"Ozcan$^{2}$, Svetlana Gorokhova$^{3}$\\ 
\small $1$ Sobolev Institute of Mathematics, Novosibirsk, Russia\\
\small $2$ Hacettepe University, Ankara, Turkey\\
\small $3$ Southern Mathematical Institute -- the Affiliate of Vladikavkaz Scientific Center of RAN, Vladikavkaz, Russia}
\maketitle

\abstract{We study relatively uniformly continuous operator semigroups on ordered vector spaces and extend several recent results
obtained by M. Kramar Fijav\v{z}, M. Kandi\'{c}, M. Kaplin, and J. Gl\"{u}ck in the vector lattice setting to ordered vector spaces 
with generating cones.}

\bigskip
{\bf{Keywords:}} 
{\rm ordered vector space, generating cone, relatively uniform convergence, operator semigroup, order continuous at zero}\\

{\bf MSC2020:} {\rm 46A40, 46B42, 47B60, 47D03}
\large

\section{Introduction and preliminaries}

\hspace{4mm}
Relatively uniformly continuous semigroups on vector lattices (VLs) were introduced and studied by Kramar Fijav\v{z}, Kandi\'{c}, and Gl\"{u}ck
\cite{KK-F2020,KK2020,GK2024} in order to extend the theory of $C_0$-semigroups to the VL setting 
(see also a recent paper \cite{E2025} of the first author). It is rather natural and flexible to consider an abstract 
Cauchy problem on a dense subspace of a Banach space (see \cite{KK2020} and references therein). 
In a plenty of cases (e.g., a subspace of analytic functions), one may face even lack of the VL structure.
However, in various applications, underling PDEs describe processes in a compact area, and then
the space still possesses a structure of an ordered vector space (OVS) with a generating cone. 
This motivates us to study how far the results of papers \cite{GK2024, KK2020, KK-F2020}
can be extended to OVSs. 

Throughout the paper, vector spaces are real and operators are linear.
By $x_\alpha\downarrow 0$ we denote a decreasing net satisfying $\inf\limits_\alpha x_\alpha=0$. 
An OVS $X$ is said to have a {\em generating cone} if the span of its positive cone $X_+$ is $X$, or
equivalently $X_+-X_+=X$. An element $u\in X_+$ is said to be an {\em order unit} of $X$
if $X=\bigcup\limits_{r\in\mathbb{R}_+}r[-u,u]$. Clearly, if an OVS $X$ possesses an order unit
then the positive cone $X_+$ is generating. 

\begin{definition}\label{def of convergences in OVS}
A net $(x_\alpha)$ in an OVS $X$
\begin{enumerate}[$a)$]
\item\ 
{\em order converges} to $x\in X$ $($briefly, \text{\rm o}-converges to $x$ or $x_\alpha\convo x$$)$ 
if there exists a net $g_\beta\downarrow 0$ in $X$ such that, for each $\beta$, there is 
$\alpha_\beta$ with $\pm(x_\alpha-x)\le g_\beta$ for $\alpha\ge\alpha_\beta$. 
\item\ 
{\em relative uniform converges} to $x\in X$ $($briefly, \text{\rm ru}-converges to $x$$)$ 
if, for some $u\in X$ $($a {\em regulator} of the convergence$)$, there exists an 
increasing sequence $(\alpha_n)$ of indices 
satisfying $\pm(x_\alpha-x)\le\frac{1}{n}u$ for $\alpha\ge\alpha_n$.
We abbreviate this by $x_\alpha\convr x(u)$ or simply  $x_\alpha\convr x$.
\item\ 
{\em \text{\rm ru}-Cauchy} if the double net $(x_{\alpha'}-x_{\alpha''})$ \text{\rm ru}-converges to $0$.
\end{enumerate}
\end{definition}

\begin{definition}\label{def of subsets of OVS}
A subset $C$ of an OVS $X$ is called
\begin{enumerate}[$a)$]
\item\
\text{\rm ru}-{\em closed} if, for every net $(x_\alpha)$ in $C$,  $x_\alpha\convr x$ implies $x\in C$.
\item\
\text{\rm ru}-{\em dense} if, for each $x\in X$ there is a net $(x_\alpha)$ in $C$ such that $x_\alpha\convr x$. 
\item\
{\em \text{\rm ru}-{\em complete in}} $X$ if, for each \text{\rm ru}-Cauchy net $(x_\alpha)$ in $C$
with a regulator $u\in X$ there exist $x\in C$ and $w\in X$ such that $x_\alpha\convr x(w)$.
\end{enumerate}
\end{definition}

\noindent
As the \text{\rm ru}-convergence is sequential (see, e.g., \cite[Proposition 2.4]{AEG2021}), 
nets in Definition \ref{def of subsets of OVS} can be replaced by sequences.

\medskip
The complements to \text{\rm ru}-closed subsets of a VL $X$ forms the (not necessary linear) \text{\rm ru}-{\em topology} on $X$
introduced by W.A.L. Luxemburg and L.C-Jr. Moore in \cite{LM1967}.
It is well known that \text{\rm ru}-topology agrees with norm topology on a Banach lattices (BL)
(e.g., by \cite[Lemma 2.30]{AT2007}). By \cite[Example 2.2]{KK-F2020}, 
the \text{\rm ru}-topology on $L^p(\mathbb{R})$ is not locally convex when $0<p<1$.

\medskip
\noindent
It should be noted that the similarly defined \text{\rm ru}-{\em topology} on an OVS with a generating cone enjoy 
all the properties of \text{\rm ru}-{\em topology} on a VL listed in \cite[Section 4]{LM1967}  
with a little clarification that the use of modulus like $|g|\le f$ need to be replaced by $\pm g\le f$.

\medskip
In the present paper, we focus at $\mathbb{R}_+$-parameter operator semigroups.
Namely, under a {\em semigroup $(T_s)_{s\ge 0}$ on a vector space $X$} we understand  
\begin{enumerate}[-]
\item\
a family of operators on $X$ satisfying $T_{s+t}=T_sT_t$ for all $s,t\in\mathbb{R}_+$ and $T_0=I_X$, where $I_X$ is the identity operator on $X$.
\end{enumerate}
\medskip
\noindent
For unexplained terminology, notation, and basic results on OVSs, VLs, 
and operator semigroups, we refer to \cite{AT2007,E2007,EN2000,LZ1971,AB2006,V1967}.

\section{\large Relatively uniformly continuous at zero semigroups on ordered vector spaces}

\hspace{4mm}
In order to deal with partial differential equations, whose boundary
conditions are facing lack of the common topological structure and 
belong to just an Archimedean VL, Kandi\'{c} and Kaplin introduced 
an important concept of a relatively uniformly continuous semigroup on
a VL \cite[Definition 3.3]{KK2020}. We extend their definition as follows.

\begin{definition}\label{def of r-cont semigroup}
A semigroup $(T_s)_{s\ge 0}$ on an OVS $X$ is
\begin{enumerate}[$a)$]
\item 
{\em relatively uniformly continuous} $($or, a \text{\rm ruc} semigroup$)$ if $T_{s+h}x\convr T_sx$ for every $x\in X$ and $s\ge 0$ as $h\to 0$.
\item
{\em \text{\rm ruc} at zero}, if $T_{h}x\convr x$ as $h\downarrow 0$ for each $x\in X$.
\item 
{\em order continuous} if  $T_{s+h}x\convo T_sx$ for every $x\in X$ and $s\ge 0$, as $h\to 0$.
\item
{\em order continuous at zero} if $T_{h}x\convo x$ as $h\downarrow 0$ for each $x\in X$.
\item
{\em order bounded at zero} if, for each $x\in X$ there exists $\varepsilon_x>0$ such that 
the set $\{T_{h}x\}_{h\le\varepsilon_x}$ is order bounded.
\end{enumerate}
\end{definition}

\noindent
It is worth noting that in Archimedean OVSs \text{\rm ru}-convergence imply \text{\rm o}-convergence, and hence each \text{\rm ruc} (at zero) semigroup
on an Archimedean OVS is order continuous (at zero). There are two relatively easy cases when a semigroup 
admits a localization of the property to be \text{\rm ruc}/order continuous.

\begin{theorem}\label{localization at zero}
Let $(T_s)_{s\ge 0}$ be a semigroup on an OVS $X$ with a generating cone.
\begin{enumerate}[$i)$]
\item 
Suppose $(T_s)_{s\ge 0}$ is \text{\rm ruc} at zero and there exists $\varepsilon>0$ so that, for each $b\in X_+$,
there exists $v\in X_+$ such that $\bigcup\limits_{0\le s\le \varepsilon}T_s[-b,b]\subseteq[-v,v]$. Then $(T_s)_{s\ge 0}$ is \text{\rm ruc}.
\item
Suppose $(T_s)_{s\ge 0}$ is order continuous at zero and there exists $\varepsilon>0$ so that, for each $x_\alpha\convo 0$
there is a net $g_\beta\downarrow 0$ such that, for each $\beta$ there exists $\alpha_\beta$ satisfying 
$\pm T_sx_\alpha\le g_\beta$ for $\alpha\ge\alpha_\beta$ and $0\le s\le\varepsilon$. Then $(T_s)_{s\ge 0}$ is order continuous.
\end{enumerate}
\end{theorem}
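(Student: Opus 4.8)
The plan is to prove both parts by the same mechanism: reduce continuity at an arbitrary $s\ge0$ to the behaviour near zero via the semigroup identities $T_{s+h}x-T_sx=T_s(T_hx-x)$ (for $h>0$) and $T_sx-T_{s-\eta}x=T_{s-\eta}(T_\eta x-x)$ (for $0<\eta\le s$), and then transport the convergence at zero through the operators $T_\sigma$. The only data at zero are the hypotheses that $(T_s)_{s\ge0}$ is \text{\rm ruc} (resp.\ order continuous) at zero, so the entire difficulty is to control what $T_\sigma$ does to a \text{\rm ru}-null (resp.\ \text{\rm o}-null) net, uniformly in $\sigma$ over the relevant range.

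For part $(i)$ I would first record that a regulator $u$ of a \text{\rm ru}-null net is automatically positive (adding $\pm(x_\alpha-x)\le\frac1n u$ gives $0\le\frac2n u$), so $T_hx-x\in\frac1n[-u,u]$ with $u\in X_+$. The core step is an order-boundedness propagation lemma: for every $b\in X_+$ and every $S\ge0$ there is $w\in X_+$ with $\bigcup_{0\le\sigma\le S}T_\sigma[-b,b]\subseteq[-w,w]$. This I would prove by induction on $\lceil S/\varepsilon\rceil$, writing $\sigma=\tau+\rho$ with $\rho\in[0,\varepsilon]$, $\tau\in[0,S]$, so that $T_\sigma[-b,b]=T_\tau T_\rho[-b,b]\subseteq T_\tau[-v,v]$ by the hypothesis, and then applying the inductive bound to the interval $[-v,v]$; note this uses only monotonicity of set-images, not positivity of the $T_\sigma$. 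Granting the lemma with $S=s$ and $b=u$, one gets a single $w$ with $T_\sigma[-u,u]\subseteq[-w,w]$ for all $\sigma\in[0,s]$; feeding $T_hx-x\in\frac1n[-u,u]$ through $T_s$ (for the right limit) and through $T_{s-\eta}$ (for the left limit, where $\sigma=s-\eta\in[0,s]$) yields $\pm(T_{s+h}x-T_sx)\le\frac1n w$ for $0<|h|\le\min(h_n,s)$, which is exactly $T_{s+h}x\convr T_sx$.

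For part $(ii)$ the same scheme applies with order intervals replaced by \text{\rm o}-null nets. The analogue of the propagation lemma is: for each $S\ge0$ the family $\{T_\sigma\}_{0\le\sigma\le S}$ sends every \text{\rm o}-null net $(x_\alpha)$ to an \text{\rm o}-null net uniformly in $\sigma$, i.e.\ there is $g_\beta\downarrow0$ with $\pm T_\sigma x_\alpha\le g_\beta$ eventually for all $\sigma\in[0,S]$. I would prove this by induction on $m=\lceil S/\varepsilon\rceil$, the base case being the hypothesis; for the step I apply the hypothesis to obtain the uniform bound on $[0,\varepsilon]$, observe that then $T_\varepsilon x_\alpha\convo0$, apply the inductive bound to the \text{\rm o}-null net $(T_\varepsilon x_\alpha)$ to dominate $T_{\varepsilon+\tau}x_\alpha=T_\tau(T_\varepsilon x_\alpha)$ on $[\varepsilon,(m+1)\varepsilon]$, and splice the two dominating nets together via the product-index net $g_\beta+\tilde g_\gamma\downarrow0$. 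With the lemma at $S=s$, the right limit follows by pushing $y_h=T_hx-x\convo0$ through the fixed operator $T_s$, and the left limit by pushing $z_\eta=T_\eta x-x\convo0$ through $T_{s-\eta}$ with $s-\eta\in[0,s]$.

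The main obstacle, in both parts, is the left limit $h\uparrow0$ at a point $s>0$: here the operator $T_{s-\eta}$ that must absorb the null net itself varies with $\eta$, so pointwise control of each individual $T_\sigma$ is useless and one genuinely needs the \emph{uniform}-in-$\sigma$ bounds furnished by the propagation lemmas — which is precisely why the hypotheses are phrased with a bound uniform over $0\le s\le\varepsilon$. A secondary technical point is the splicing of two order-null dominating nets into a single decreasing one in part $(ii)$; I would handle it with the product-index net $g_\beta+\tilde g_\gamma$ and verify $\inf_{\beta,\gamma}(g_\beta+\tilde g_\gamma)=0$ directly by fixing one index and letting the other run.
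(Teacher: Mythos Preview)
Your proposal is correct and follows the same skeleton as the paper: reduce continuity at $s_0$ to continuity at zero via $T_{s_0+h}x-T_{s_0}x=T_{s_0}(T_hx-x)$ and $T_{s_0-h}x-T_{s_0}x=T_{s_0-h}(x-T_hx)$, then use a uniform-in-$s$ bound over $[0,s_0]$ to absorb the null net. The only difference is that where the paper invokes \cite[Proposition~3.3 and Lemma~3.2]{EEG2024} to pass from the $[0,\varepsilon]$ hypothesis to the uniform bound on $[0,s_0]$, you supply this propagation step yourself by the evident induction on $\lceil s_0/\varepsilon\rceil$; your splicing argument $g_\beta+\tilde g_\gamma\downarrow 0$ in part~(ii) is exactly what that cited result amounts to, so the two proofs are essentially the same.
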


\begin{proof}
$i)$\
Take arbitrary $x\in X$ and $s_0>0$, and let $h\downarrow 0$. By the assumption, $(T_hx-x)\convr 0$. 
By \cite[Proposition 3.3 and Lemma 3.2]{EEG2024}, for every $x_\alpha\convr 0$ there exist $u\in X_+$ and an increasing sequence 
$(\alpha_n)$ of indices with $\pm T_sx_\alpha\le\frac{1}{n}u$ for
$\alpha\ge\alpha_n$ and $0\le s\le s_0$. In particular, $T_{s_0}$ is \text{\rm ru}-continuous. Thus, $T_{s_0+h}x-T_{s_0}x=T_{s_0}(T_hx-x)\convr 0$,
and hence the function $s\to T_sx$ is right \text{\rm ru}-continuous at $s_0$.

Since $(x-T_hx)\convr 0$, there exist $w\in X_+$ and a decreasing sequence $(h_n)$ in $(0,s_0]$
with $\pm T_s(x-T_hx)\le\frac{1}{n}w$ for $0<h\le h_n$ and $0\le s\le s_0$.
In particular, $T_{s_0-h}x-T_{s_0}x=T_{s_0-h}(x-T_hx)\convr 0$ for $s_0\ge h\downarrow 0$, 
and hence the function $s\to T_sx$ is also left \text{\rm ru}-continuous at $s_0$.

\medskip
$ii)$\
Take $x\in X$ and $s_0>0$, and let $h\downarrow 0$. Then $(T_hx-x)\convo 0$.
By \cite[Proposition 3.3]{EEG2024}, for each $x_\alpha\convo 0$, there is a net $g_\beta\downarrow 0$ such that, 
for each $\beta$, there is $\alpha_\beta$ satisfying $\pm T_sx_\alpha\le g_\beta$ for $\alpha\ge\alpha_\beta$ and $0\le s\le s_0+1$.
In particular, $T_{s_0}$ is order continuous. Therefore, we obtain $T_{s_0+h}x-T_{s_0}x=T_{s_0}(T_hx-x)\convo 0$,
and hence $s\to T_sx$ is right order continuous at $s_0$.

Applying once more \cite[Proposition 3.3]{EEG2024}, find a net $q_\beta\downarrow 0$ such that, for each $\beta$, 
there is $h_\beta\in(0,s_0]$ with $\pm T_s(x-T_hx)\le q_\beta$ for $0<h\le h_\beta$ and $0\le s\le s_0$.
In particular, $T_{s_0-h}x-T_{s_0}x=T_{s_0-h}(x-T_hx)\convo 0$ for $s_0\ge h\downarrow 0$, 
and hence $s\to T_sx$ is also left order continuous at $s_0$.
\end{proof}

%
%

\medskip
$C_0$-semigroups need not to be \text{\rm ruc} at zero \cite[Example 3.12]{KK2020}. 
In the reverse direction we have the following partial result.

\begin{proposition}\label{ruc and oc to C_0}
$(T_s)_{s\ge 0}$ is a $C_0$-semigroup on $X$ if one of the following conditions holds.
\begin{enumerate}[$i)$]
\item
$X$ is an OBS with a closed generating normal cone, $(T_s)_{s\ge 0}$ 
is \text{\rm ruc} at zero, and there exists $\varepsilon>0$ such that, for each $b\in X_+$ 
the set $\bigcup\limits_{0\le h\le\varepsilon}T_h[0,b]$ is order bounded.
\item
$X$ is a BL with order continuous norm and $(T_s)_{s\ge 0}$ is an order continuous 
at zero semigroup which consists of bounded operators.
\end{enumerate}
\end{proposition}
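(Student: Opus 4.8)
The plan is to reduce both statements to the standard characterization of $C_0$-semigroups: a semigroup of \emph{bounded} operators on a Banach space is a $C_0$-semigroup precisely when it is strongly right-continuous at the origin, i.e. $\lim_{h\downarrow0}\|T_hx-x\|=0$ for every $x\in X$ (local boundedness on a neighbourhood of $0$ then follows from the uniform boundedness principle, and strong continuity on all of $[0,\infty)$ from the semigroup law and $T_s=(T_{s/m})^m$; see, e.g., \cite{EN2000}). Thus in each case I must establish (a) boundedness of every $T_s$, and (b) norm-continuity at zero. For part $ii)$ both are immediate: boundedness is part of the hypothesis, and since $X$ is a BL with order continuous norm the order-null net $T_hx-x\convo0$ is automatically norm-null (order convergence implies norm convergence under order continuity of the norm; see \cite{AB2006}), which is exactly $\lim_{h\downarrow0}\|T_hx-x\|=0$. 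So part $ii)$ is essentially a bookkeeping exercise.

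For part $i)$ the norm-continuity at zero is equally painless: in an OBS with a normal cone every order bounded set is norm bounded, hence \text{\rm ru}-convergence implies norm convergence; since $(T_s)_{s\ge0}$ is \text{\rm ruc} at zero this yields $\|T_hx-x\|\to0$ as $h\downarrow0$. The real work, and the step I expect to be the main obstacle, is to show that each $T_s$ is a \emph{bounded} operator, since no positivity of the $T_s$ is assumed. I would first treat $T_h$ for $0\le h\le\varepsilon$ by the closed graph theorem combined with a domination trick, and then pass to arbitrary $s$ via $T_s=(T_{s/m})^m$ with $s/m\le\varepsilon$.

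To prove $T_h$ bounded for $h\le\varepsilon$, suppose $x_n\to0$ and $T_hx_n\to y$ in norm. Because $X_+$ is closed and generating, a standard Andô-type decomposition (see \cite{AT2007}) furnishes a constant $\gamma$ so that, after passing to a fast subsequence with $\|x_{n_k}\|\le\gamma^{-1}8^{-k}$, one gets $x_{n_k}=u_k-v_k$ with $u_k,v_k\in X_+$ and $\|u_k\|,\|v_k\|\le8^{-k}$. Then $\sum_k2^ku_k$ converges in norm to some $\tilde u\in X_+$, so that $0\le u_k\le2^{-k}\tilde u$, and likewise $0\le v_k\le2^{-k}\tilde v$. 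The hypothesis makes $T_h[0,\tilde u]$ order bounded, say $T_h[0,\tilde u]\subseteq[c,d]$, whence $T_hu_k\in2^{-k}T_h[0,\tilde u]\subseteq[2^{-k}c,2^{-k}d]$; normality of the cone then forces $\|T_hu_k\|\to0$, and symmetrically $\|T_hv_k\|\to0$. Therefore $T_hx_{n_k}=T_hu_k-T_hv_k\to0$, which compels $y=0$ and closes the graph. The delicate point — the crux of the whole proposition — is exactly here: one cannot bound $\|T_hb\|$ linearly in $\|b\|$ over $X_+$ directly, and it is the passage to a summable dominating series that converts the merely qualitative order-boundedness assumption into the quantitative norm decay needed to kill $y$.

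Once every $T_s$ is known to be bounded, local boundedness near $0$ comes cheaply in case $i)$: for $u\in X_+$ one has $T_su\in\bigcup_{0\le s\le\varepsilon}T_s[0,u]$, an order bounded and hence norm bounded set, so writing $x=u-v$ shows each orbit $\{T_sx:0\le s\le\varepsilon\}$ is norm bounded, and the uniform boundedness principle yields $\sup_{0\le s\le\varepsilon}\|T_s\|<\infty$. Combined with the norm-continuity at zero already established, the standard characterization recalled in the first paragraph gives that $(T_s)_{s\ge0}$ is a $C_0$-semigroup, completing both cases. I note that all three properties of the cone in $i)$ are genuinely used: closedness to keep the series limits in $X_+$, generation for the decomposition, and normality to turn order bounds into norm bounds.
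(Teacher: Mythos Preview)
Your proof is correct, and for part $ii)$ it matches the paper's argument essentially verbatim. For part $i)$, however, you take a genuinely different route. The paper disposes of the boundedness of the operators $T_s$ in a single line by invoking \cite[Proposition~3.3(iv)]{EEG2024}, an external result on collective order boundedness which, under the hypothesis on $\bigcup_{0\le h\le\varepsilon}T_h[0,b]$, directly yields $\sup_{0\le h\le M}\|T_h\|<\infty$ for every $M\ge 0$; normality then converts \text{\rm ruc}-at-zero into norm continuity at zero and the proof is done. You instead give a fully self-contained argument for boundedness via the closed graph theorem, using the Krein--\v{S}mulian/And\^o decomposition together with the summable-series trick to manufacture a single dominating element $\tilde u\in X_+$ from which the order-boundedness hypothesis extracts the quantitative norm decay $\|T_hu_k\|\to 0$. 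Your approach is longer but independent of the companion paper \cite{EEG2024}; the paper's is terser but offloads precisely the step you identify as ``the crux'' to an outside citation whose proof in fact contains machinery very close to what you develop by hand. Either way, once each $T_s$ is bounded, both arguments finish identically by reducing to strong continuity at the origin.
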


\begin{proof}
$i)$\
It follows from \cite[Proposition 3.3(iv)]{EEG2024} that $\sup\limits_{0\le h\le M}\|T_h\|<\infty$ for every $M\in\mathbb{R}_+$.
Let $x\in X$ and $h\downarrow 0$. Then $T_hx\convr x$ because $(T_s)_{s\ge 0}$ is \text{\rm ruc} at zero. Normality of $X_+$ ensures 
$T_hx\convr x\Longrightarrow\|T_hx-x\|\to 0$.

\medskip
$ii)$\
Let $x\in X$ and $h\downarrow 0$.
Since the norm in $E$ is order continuous, the rest follows from $T_hx\convo x\Longrightarrow\|T_hx-x\|\to 0$ for all $x\in X$.
\end{proof}

\begin{proposition}\label{localization at zero of obz in OVS}
Let $(T_s)_{s\ge 0}$ be a positive semigroup on an OVS $X$ with a generating cone. The following assertions are equivalent.
\begin{enumerate}[$i)$]
\item
$(T_s)_{s\ge 0}$ is order bounded at zero.
\item
For every $x\in X$ and $M\ge 0$ the set $\{T_{h}x\}_{0\le h\le M}$ is order bounded.
\item
For every $x\in X_+$ and $M\ge 0$ the set $\bigcup\limits_{0\le s\le M}T_s[-x,x]$ is order bounded. 
\item
There exists $\varepsilon>0$ such that the set $\bigcup\limits_{0\le s\le\varepsilon}T_s[-x,x]$ is order bounded for every $x\in X_+$. 
\end{enumerate}
\end{proposition}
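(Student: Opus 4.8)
The plan is to run the cycle $(i)\Rightarrow(ii)\Rightarrow(iii)\Rightarrow(iv)\Rightarrow(i)$, with every arrow except the first being an elementary consequence of positivity and the generating cone, and the first arrow carrying the real content. Two facts will be used repeatedly. The first is a \emph{domination principle}: since each $T_s$ is positive, for $x\in X_+$ and any $y\in[-x,x]$ we have $x\pm y\in X_+$, hence $T_s(x\pm y)\ge 0$, which gives $-T_sx\le T_sy\le T_sx$; thus $T_s[-x,x]\subseteq[-T_sx,T_sx]$. The second is a \emph{splitting}: as $X_+$ is generating, each $x\in X$ is $x=a-b$ with $a,b\in X_+$, so $x\in[-u,u]$ for $u:=a+b\in X_+$.

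For $(ii)\Rightarrow(iii)$ I fix $x\in X_+$ and $M\ge 0$. By $(ii)$ the set $\{T_sx\}_{0\le s\le M}$ is order bounded, and since $T_sx\ge 0$ there is $v\in X_+$ with $0\le T_sx\le v$ for all $0\le s\le M$. The domination principle then yields $T_s[-x,x]\subseteq[-T_sx,T_sx]\subseteq[-v,v]$ for every such $s$, so $\bigcup_{0\le s\le M}T_s[-x,x]\subseteq[-v,v]$, which is $(iii)$. The implication $(iii)\Rightarrow(iv)$ is just the case $M=1$, $\varepsilon:=1$. For $(iv)\Rightarrow(i)$: if $x\in X_+$ then $x\in[-x,x]$, so $\{T_hx\}_{0\le h\le\varepsilon}\subseteq\bigcup_{0\le s\le\varepsilon}T_s[-x,x]$ is order bounded by $(iv)$; for general $x$ I apply the splitting and add the two bounds, so $\varepsilon_x:=\varepsilon$ works and $(i)$ follows.

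The crux is $(i)\Rightarrow(ii)$. By the splitting it suffices to bound $\{T_hx\}_{0\le h\le M}$ for $x\in X_+$, since then $T_hx=T_ha-T_hb$ is bounded above and below. Fix such an $x$. By $(i)$ there are $\delta>0$ and $v\in X_+$ with $0\le T_hx\le v$ for $0\le h\le\delta$. The semigroup law and positivity suggest propagating this bound: for $s\in[\delta,2\delta]$ one has $T_sx=T_{s-\delta}(T_\delta x)\le T_{s-\delta}v$, so it would remain to control $\{T_rv\}$ on an initial interval, which $(i)$ again supplies for $v$. Iterating would cover $[0,M]$ in finitely many steps \emph{provided the step lengths do not degenerate}.

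Here lies the main obstacle: the neighbourhood radius furnished by $(i)$ depends on the base element, and a priori it may shrink as one advances, so the naive chain of steps need not reach a prescribed $M$. This is exactly the localization difficulty met in Theorem \ref{localization at zero} and Proposition \ref{ruc and oc to C_0}, and I would resolve it by the same device, invoking the localization result \cite[Proposition 3.3]{EEG2024}, which promotes an at-zero order-boundedness hypothesis to order boundedness that is uniform over compact time intervals $[0,M]$; combined with the domination principle this yields $(ii)$ and closes the cycle. A self-contained alternative would be to first extract a single $\varepsilon$ serving all $x\in X_+$ (that is, $(i)\Rightarrow(iv)$) and then globalize by the finite iteration $T_s[-x,x]\subseteq[-v_k,v_k]$ on $[0,k\varepsilon]$, where $v_0:=x$ and $v_{k+1}\in X_+$ dominates $\bigcup_{0\le s\le\varepsilon}T_s[-v_k,v_k]$; the delicate point is again the extraction of the uniform $\varepsilon$, and this is where I expect the argument to demand the most care.
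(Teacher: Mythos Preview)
Your cycle $(ii)\Rightarrow(iii)\Rightarrow(iv)\Rightarrow(i)$ is fine and essentially matches the paper. The difficulty you identify in $(i)\Rightarrow(ii)$, however, is self-inflicted: it comes from factoring the semigroup in the wrong order. You write $T_sx=T_{s-\delta}(T_\delta x)\le T_{s-\delta}v$ and then worry about bounding the orbit of the \emph{new} vector $v$ on $[0,\delta]$, which indeed would force a possibly smaller $\delta_v$ and a degenerating iteration. But the semigroup commutes, so one may equally write $T_sx=T_\delta(T_{s-\delta}x)$. For $s\in[\delta,2\delta]$ one has $s-\delta\in[0,\delta]$, hence $T_{s-\delta}x\in[-w,w]$ by the initial bound, and positivity of the single fixed operator $T_\delta$ gives $T_sx\in T_\delta[-w,w]\subseteq[-T_\delta w,T_\delta w]$. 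Thus
\[
   \{T_hx\}_{0\le h\le 2\delta}\subseteq[-(w+T_\delta w),\,w+T_\delta w],
\]
and iterating with the \emph{same} step $\delta$ yields $\{T_hx\}_{0\le h\le n\delta}\subseteq\bigl[-\sum_{k=0}^{n-1}T_{k\delta}w,\ \sum_{k=0}^{n-1}T_{k\delta}w\bigr]$. This is exactly the paper's proof: no appeal to \cite[Proposition 3.3]{EEG2024}, no extraction of a uniform $\varepsilon$, and no splitting into positive parts is needed for this implication (the paper keeps $x\in X$ arbitrary).

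Your proposed fallback via \cite[Proposition 3.3]{EEG2024} is also suspect here: in the present paper that result is invoked only under a \emph{collective} hypothesis of the form ``for each $b\in X_+$ the set $\bigcup_{0\le s\le\varepsilon}T_s[-b,b]$ is order bounded'', which is precisely condition $(iv)$ you are trying to reach. So using it to pass from $(i)$ to $(ii)$ would at best be circular.
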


\begin{proof}
$i)\Longrightarrow ii)$\  
Let $x\in X$ and $M\ge 0$. By the assumption, there exist $a,b\in X$ and $\varepsilon>0$
satisfying $\big\{T_hx\big\}_{0\le h\le \varepsilon}\subseteq[a,b]$. Since $X_+-X_+=X$, $\{T_{h}x\}_{0\le h\le\varepsilon}\subseteq[-w,w]$
for some $w\in X$. Then 
$$
   \big\{T_hx\big\}_{0\le h\le 2\varepsilon}=\big\{T_hx\big\}_{0\le h\le \varepsilon}\bigcup T_\varepsilon\big(\big\{T_hx\big\}_{0\le h\le\varepsilon}\big)
   \subseteq[-w-T_\varepsilon w,w+T_\varepsilon w],
$$
and by repeating, for $n$ such that $n\varepsilon\ge M$, we have
$$
   \big\{T_hx\big\}_{0\le h\le M}\subseteq\big\{T_hx\big\}_{0\le h\le n\varepsilon}\subseteq
   \left[-\sum_{k=0}^{n-1}T_{k\varepsilon}w,\sum_{k=0}^{n-1}T_{k\varepsilon}w\right].
$$ 

\medskip
$ii)\Longrightarrow iii)$\  
Let $x\in X_+$ and $M\ge 0$. 
Since $X_+-X_+=X$, $ii)$ implies $\{T_sx\}_{0\le s\le M}\subseteq[-b,b]$ for some $b\in X$. 
Since $(T_s)_{s\ge 0}$ is positive then $-b\le -T_sx\le T_sy\le T_sx\le b$ 
for all $y\in[-x,x]$ and $s\in[0,M]$, and hence $\bigcup\limits_{0\le s\le M}T_s[-x,x]\subseteq[-b,b]$.

\medskip
$iii)\Longrightarrow iv)$\ 
It is trivial.

\medskip
$iv)\Longrightarrow i)$\ 
Let $x\in X$. Find $\varepsilon>0$ such that $\bigcup\limits_{0\le h\le\varepsilon}T_h[-z,z]$ is order bounded for every $z\in X_+$. 
Since $X_+$ is generating, $x\in[-z,z]$ for some $z\in X$, and hence the set $\{T_hx\big\}_{0\le h\le \varepsilon}$ is order bounded.
Since $x\in X$ is arbitrary, $(T_s)_{s\ge 0}$ is order bounded at zero.
\end{proof}

It is proved in \cite[Proposition 3.4]{KK2020} that, for each \text{\rm ruc} at zero positive semigroup $(T_s)_{s\ge 0}$ on a VL $X$,
the set $\{T_sx: 0\le s\le M\}$ is order bounded for every $M\ge 0$ and $x\in X$. 
By Proposition \ref{localization at zero of obz in OVS}, the same conclusion 
holds for each order bounded at zero positive semigroup on an OVS with a generating cone. 
In general, the assumption that the semigroup $(T_s)_{s\ge 0}$ is positive is essential in Proposition \ref{localization at zero of obz in OVS}.
The next example shows that even a uniformly continuous \text{\rm ruc} semigroup $(S_s)_{s\ge 0}$
on an order continuous Banach lattice $X$ can be order bounded at zero yet  $\bigcup\limits_{0\le s\le\varepsilon}S_s[0,x]$ 
is not order bounded for some $x\in X_+$ and each $\varepsilon>0$.

\begin{example}\label{example -(n+1)}
Let $E=\big(\bigoplus_{k=1}^\infty\ell^{2^n}\big)_0$ be the order continuous Banach lattice from the Krengel example 
{\em \cite[Example 5.6]{AB2006}}. Consider the operator $T:E\to E$ defined by $T(x_1,x_2,\cdots)=({T_1}x_1,{T_2}x_2,\cdots)$,
where $T_n:\ell^{2^n}\to\ell^{2^n}$ is invertable isometry from {\em \cite[Example 5.6]{AB2006}}.
Clearly, $\|T\|=1$. However, $T$ is not order bounded. Indeed, otherwise $|T|$ would exists because $E$ is Dedekind complete, 
and then $|T|{\bf x}=(|{T_1}|x_1,|{T_2}|x_2,\cdots)$ for every ${\bf x}=(x_1,x_2,\cdots)\in E$. Take 
$$
   x_n:=\frac{1}{n}\cdot 2^{-\frac{n}{2}}\sum_{i=1}^{2^n}e_i\in\ell^{2^n} \ \ \ \ \text{\rm and \ let} \ \ {\bf x}=(x_1,x_2,\cdots)\in E.
  \eqno(1)
$$
Denote by $M_n$ an $2^n\times 2^n$ matrix with all of its entries equal to $1$.
This leads to absurd, since  
$$
   |T|{\bf x}=(|{T_n}|x_n)_{n\in\mathbb{N}}=
   \left(\frac{1}{n}\cdot 2^{-\frac{n}{2}}|{T_n}|\left(\sum_{i=1}^{2^n}e_i\right)\right)_{n\in\mathbb{N}}=
$$
$$
   \left(\frac{1}{n}\cdot 2^{-\frac{n}{2}}\cdot 2^{-\frac{n}{2}} M_n\left(\sum_{i=1}^{2^n}e_i\right)\right)_{n\in\mathbb{N}}=
   \left(\frac{1}{n}\left(\sum_{i=1}^{2^n}e_i\right)\right)_{n\in\mathbb{N}}\notin E,
   \eqno(2)
$$
because $\left\|\frac{1}{n}\left(\sum_{i=1}^{2^n}e_i\right)\right\|_2=\frac{\sqrt{2^n}}{n}\to\infty$.

\medskip
Consider the order continuous Banach lattice $X=E\oplus E$, define a contraction $A:X\to X$ by $A({\bf f},{\bf g})=(0,T{\bf f})$,
and set $S_s=e^{sA}=\sum_{l=0}^{\infty}\frac{s^lA^l}{l!}$ for $s\ge 0$. $(S_s)_{s\ge 0}$ is a uniformly continuous semigroup,
and $S_s=I_X+sA$ for $s\ge 0$ because $A^2=0$.
Then $(S_s)_{s\ge 0}$ is \text{\rm ruc} since, for $({\bf f},{\bf g})\in X=E\oplus E$ and $s_0\ge 0$,
$$
   S_{s_0+h}({\bf f},{\bf g})-S_{s_0}({\bf f},{\bf g})=(I_X+(s_0+h)A-(I_X+s_0A))({\bf f},{\bf g})=
$$ 
$$
   hA({\bf f},{\bf g})=h(0,T{\bf f})\convr 0
$$ 
with regulator $(0,T{\bf f})$ as $s_0+h\ge 0$ and $h\to 0$. As \text{\rm ru}-converging nets are eventually order bounded,
$(S_s)_{s\ge 0}$ is also order bounded at zero.

\medskip
Note that the set $\bigcup\limits_{0\le s\le\varepsilon}S_s[0,({\bf x},0)]$ is not order bounded in $X$ for every $\varepsilon>0$,
where ${\bf x}\in X$ is as in $(1)$. Otherwise, the set $\bigcup\limits_{0\le s\le\varepsilon}sA[0,({\bf x},0)]$
would be order bounded in $E$ for some $\varepsilon>0$, and hence $T[-{\bf x},{\bf x}]$ must be order bounded in $E$. Then 
$(|{T_n}|x_n)_{n\in\mathbb{N}}\in({T_n}[-x_n,x_n])_{n\in\mathbb{N}}\subseteq T[-{\bf x},{\bf x}]$ which is absurd because
$(|{T_n}|x_n)_{n\in\mathbb{N}}\notin E$ by $(2)$.
\end{example}

\medskip
The following theorem generalizes \cite[Proposition 3.4]{KK2020} by dropping positivity of the semigroup in favor of the condition 
that the semigroup contains infinitely many order bounded operators in every neighborhood of zero.

\begin{theorem}\label{ob-ruc}
Let $(T_s)_{s\ge 0}$ be a semigroup on an OVS $X$ with a generating cone such that,
for some sequence $r_k\downarrow 0$ in $\mathbb{R}_+\setminus\{0\}$ every $T_{r_k}$ is order bounded. 
Then the set $\{T_sx: 0\le s\le M\}$ is order bounded for every $M\ge 0$ and $x\in X$ if one of the following two conditions holds.
\begin{enumerate}[$i)$]
\item
$(T_s)_{s\ge 0}$ is \text{\rm ruc} at zero.
\item
$(T_s)_{s\ge 0}$ is order continuous at zero.
\end{enumerate}
\end{theorem}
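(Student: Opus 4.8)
The plan is to treat hypotheses $(i)$ and $(ii)$ simultaneously by first extracting from each the weaker property that $(T_s)_{s\ge 0}$ is order bounded at zero (Definition \ref{def of r-cont semigroup}$(e)$). Indeed, fix $x\in X$. If $(T_s)_{s\ge 0}$ is \text{\rm ruc} at zero, then $T_hx\convr x$ as $h\downarrow 0$, so taking $n=1$ in the definition of \text{\rm ru}-convergence yields a regulator $u\in X$ and $\varepsilon_x>0$ with $\pm(T_hx-x)\le u$ for $0<h\le\varepsilon_x$; together with $T_0x=x$ this gives $\{T_hx\}_{0\le h\le\varepsilon_x}\subseteq[x-u,x+u]$. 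If instead $(T_s)_{s\ge 0}$ is order continuous at zero, then $T_hx\convo x$, and fixing a single index $\beta$ in the definition of \text{\rm o}-convergence produces $g_\beta\downarrow 0$ and $\varepsilon_x>0$ with $\{T_hx\}_{0\le h\le\varepsilon_x}\subseteq[x-g_\beta,x+g_\beta]$. Thus in both cases $\{T_hx\}_{0\le h\le\varepsilon_x}$ is order bounded, and it remains to prove the conclusion under the single hypothesis that $(T_s)_{s\ge 0}$ is order bounded at zero.

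Next I would fix $x\in X$ and $M\ge 0$ and let $\varepsilon_x>0$ be as above. Because the cone is generating, any order interval can be symmetrized, so $\{T_hx\}_{0\le h\le\varepsilon_x}\subseteq[-w,w]$ for some $w\in X_+$. Since $r_k\downarrow 0$, choose $k$ with $r:=r_k\le\varepsilon_x$; by hypothesis $T_r$ is order bounded. Fix an integer $n$ with $nr\ge M$. For $0\le s\le M$ write $s=jr+t$ with $j\in\{0,\dots,n\}$ and $t\in[0,r)$; the semigroup law gives $T_sx=T_r^{\,j}T_tx$, while $T_tx\in\{T_hx\}_{0\le h\le\varepsilon_x}\subseteq[-w,w]$.

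The heart of the argument is then the iteration. Since $T_r$ is order bounded it maps order bounded sets to order bounded sets, so setting $[-w_0,w_0]:=[-w,w]$ and choosing inductively $w_{j+1}\in X_+$ with $T_r[-w_j,w_j]\subseteq[-w_{j+1},w_{j+1}]$ (again symmetrizing by means of the generating cone) one obtains $T_r^{\,j}[-w,w]\subseteq[-w_j,w_j]$ for each $j$. As only the finitely many indices $j\le n$ occur, it follows that $T_sx\in[-w_j,w_j]\subseteq[-W,W]$ with $W:=\sum_{j=0}^{n}w_j\in X_+$, whence $\{T_sx\}_{0\le s\le M}\subseteq[-W,W]$ is order bounded, as required.

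I expect the main point to be conceptual rather than computational: recognizing that the hypothesis ``$T_{r_k}$ order bounded for $r_k\downarrow 0$'' is exactly what replaces positivity in the bootstrapping step. In the positive case of Proposition \ref{localization at zero of obz in OVS} one propagates an initial order bound along $[0,M]$ using that each $T_s$ preserves order, whereas here one pushes the symmetrized interval $[-w,w]$ forward through the finitely many iterates of the single order bounded operator $T_r$. The two places that require care are ensuring such an $r=r_k$ can be chosen below the ($x$-dependent) threshold $\varepsilon_x$ while remaining order bounded, and that the generating cone genuinely permits the repeated symmetrization $[a,b]\subseteq[-w,w]$; both are routine, but they are precisely where the structural assumptions on $X$ enter.
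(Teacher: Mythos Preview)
Your proof is correct and follows essentially the same route as the paper: both obtain an initial symmetric bound $[-w,w]$ on a short interval $[0,r]$ (with $r=r_k\le\varepsilon_x$) and then push this bound forward by iterating the single order bounded operator $T_r$ finitely many times to cover $[0,M]$. Your unification of hypotheses $(i)$ and $(ii)$ through the common consequence ``order bounded at zero'' is a mild streamlining of the paper, which treats the two cases in parallel with nearly identical arguments.
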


\begin{proof}
Take arbitrary $M\ge 0$ and $x\in X$.

\medskip
$i)$\
Since $T_hx\convr x$ as $h\downarrow 0$, there exist $u\in X$ and $\varepsilon>0$ satisfying $\pm(T_hx-x)\le u$ for all $h\in[0,\varepsilon]$. 
Pick any $r_{k_\varepsilon}\in(0,\varepsilon)$ and take $n$ with $nr_{k_\varepsilon}\ge M$.
Since $X_+-X_+=X$ then $\pm x\le w$ for some $w\in X_+$.
Thus, $\pm T_hx\le a_1:=u+w$ for $0\le h\le\varepsilon$, and hence $\big\{T_hx\big\}_{0\le h\le r_{k_\varepsilon}}\subseteq[-a_1,a_1]$.
Since $T_{r_{k_\varepsilon}}$ is order bounded and $X_+$ is generating,  
$$
   \{T_hx: 0\le h\le 2r_{k_\varepsilon}\}=T_{r_{k_\varepsilon}}\Big(\big\{T_hx\big\}_{0\le h\le r_{k_\varepsilon}}\Big)\subseteq 
   T_{r_{k_\varepsilon}}[-a_1,a_1]\subseteq[-a_2,a_2]
$$
for some $a_2\in X_+$. It follows $\big\{T_hx\big\}_{0\le h\le{r_{k_\varepsilon}}}\subseteq[-(a_1+a_2),a_1+a_2]$.
Repeating n-times the argument, we obtain $a_1,a_2,...,a_n\in X_+$ with 
$$
   \{T_hx: 0\le h\le M\}\subseteq\big\{T_hx\big\}_{0\le h\le n{r_{k_\varepsilon}}}\subseteq
   \left[-\sum_{i=1}^{n}a_i,\sum_{i=1}^{n}a_i\right].
$$

\medskip
$ii)$\
Take $M\ge 0$ and $x\in X$. Since $T_hx\convo x$ as $h\downarrow 0$, there exists a net $g_\beta\downarrow 0$ in $X$,
such that for each $\beta$ there is $k_\beta$ with $\pm(T_hx-x)\le g_\beta$ for $0\le h\le r_{k_\beta}$. 
Fix $\beta_0$ and take $w\ge\pm x$. Then $\pm T_hx\le a_1:=g_{\beta_0}+w$ for $h\in[0,r_{k_{\beta_0}}]$, and hence
$\big\{T_hx\big\}_{0\le s\le r_{k_{\beta_0}}}\subseteq[-a_1,a_1]$.
Let $nr_{k_{\beta_0}}\ge M$.  Taking in account that $T_{r_{k_{\beta_0}}}$ is order bounded and arguing as in the proof of $i)$, 
we obtain that $\big\{T_hx\big\}_{0\le h\le M}\subseteq\left[-\sum_{i=1}^{n}a_i,\sum_{i=1}^{n}a_i\right]$
for some $a_1,a_2,...,a_n\in X_+$. 
\end{proof}

\noindent
It should be clear that the assumption in Theorem \ref{ob-ruc} that operators $T_s$  are order bounded along a decreasing to zero sequence of
non-zero indices is equivalent to the assumption that $T_s$ is order bounded for $s$ belonging to a dense subset of $\mathbb{R}_+$.
Also, remark that if $X$ is Archimedean then condition $i)$ of Theorem \ref{ob-ruc} implies condition $ii)$.

\medskip
In the absence of a collective order boundedness (resp., collective order continuity) conditions like in 
Theorem \ref{localization at zero}, localization of the property of a semigroup to be \text{\rm ruc} (resp., order continuous) 
is a rather difficult task. The following theorem gives some partial results in this direction. 
Later, in Theorem \ref{Thm.5.7-KK2020} we give another localization result for the \text{\rm ruc}-property of a positive semigroup.

\begin{theorem}\label{loc-ruc}
Let $(T_s)_{s\ge 0}$ be a semigroup on an OVS $X$ with a generating cone. 
\begin{enumerate}[$i)$]
\item
If $(T_s)_{s\ge 0}$ is \text{\rm ruc} at zero and consists of order bounded operators then $(T_s)_{s\ge 0}$ is right \text{\rm ru}-continuous.
Furthermore, $(T_s)_{s\ge 0}$ is \text{\rm ruc} whenever it is positive.
\item
If $(T_s)_{s\ge 0}$ is order continuous at zero and consists of order continuous positive operators then $(T_s)_{s\ge 0}$ is right order continuous.
\end{enumerate}
\end{theorem}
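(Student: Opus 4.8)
The plan is to run everything through the two semigroup identities $T_{s_0+h}x-T_{s_0}x=T_{s_0}(T_hx-x)$ and, for the left-hand versions, $T_{s_0-h}x-T_{s_0}x=T_{s_0-h}(x-T_hx)$ (valid for $0<h\le s_0$), thereby reducing continuity at a general $s_0$ to the convergence at zero plus a suitable continuity/positivity property of the operators $T_s$. The first ingredient I would isolate is the elementary fact that an order bounded operator $S$ is \text{\rm ru}-continuous: if $y_\alpha\convr 0(u)$, pick $v\in X_+$ with $S[-u,u]\subseteq[-v,v]$; then $\pm y_\alpha\le\frac1n u$ for $\alpha\ge\alpha_n$ forces $ny_\alpha\in[-u,u]$, hence $\pm Sy_\alpha\le\frac1n v$, i.e.\ $Sy_\alpha\convr 0(v)$. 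Applying this with $S=T_{s_0}$ to the net $y_h=T_hx-x\convr 0$ (which holds because the semigroup is \text{\rm ruc} at zero) immediately gives $T_{s_0+h}x-T_{s_0}x=T_{s_0}(T_hx-x)\convr 0$ as $h\downarrow 0$, so $(T_s)_{s\ge 0}$ is right \text{\rm ru}-continuous. This settles the first assertion of $i)$ using order boundedness alone.

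For the ``furthermore'' part I would bring in positivity to produce left \text{\rm ru}-continuity. Write $x-T_hx\convr 0$ with a regulator $w\in X_+$ and a decreasing sequence $(h_n)$ such that $\pm(x-T_hx)\le\frac1n w$ for $0<h\le h_n$; positivity of $T_{s_0-h}$ then yields $\pm T_{s_0-h}(x-T_hx)\le\frac1n T_{s_0-h}w$. The crucial step is to dominate the moving family $\{T_{s_0-h}w\}$ by one element: since every $T_s$ is order bounded and the semigroup is \text{\rm ruc} at zero, Theorem~\ref{ob-ruc}$i)$ makes the orbit $\{T_sw:0\le s\le s_0\}$ order bounded, say inside $[-v,v]$, so that $T_{s_0-h}w\le v$ and therefore $\pm T_{s_0-h}(x-T_hx)\le\frac1n v$ for $0<h\le h_n$. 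Thus $T_{s_0-h}x\convr T_{s_0}x$ with the single regulator $v$, and combined with right \text{\rm ru}-continuity this gives that $(T_s)_{s\ge 0}$ is \text{\rm ruc}.

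Part $ii)$ I would handle by the same factorisation, replacing order boundedness by order continuity. From $T_hx-x\convo 0$ choose $g_\beta\downarrow 0$ with $\pm(T_hx-x)\le g_\beta$ eventually in $h$; positivity of $T_{s_0}$ gives $\pm T_{s_0}(T_hx-x)\le T_{s_0}g_\beta$, while positivity together with order continuity of $T_{s_0}$ gives $T_{s_0}g_\beta\downarrow 0$. Hence $T_{s_0}(T_hx-x)\convo 0$, that is $T_{s_0+h}x\convo T_{s_0}x$ as $h\downarrow 0$, establishing right order continuity.

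I expect the genuine difficulty to sit in the left-continuity of $i)$: unlike the right-hand case, one must control the whole family $T_{s_0-h}$ ($0<h\le s_0$) simultaneously, and the device that makes this possible is that the scalable regulator $\frac1n w$ can be pushed through all these positive operators and then dominated uniformly by a \emph{single} vector $v$ supplied by the orbit order-boundedness of Theorem~\ref{ob-ruc}. This scaling mechanism is exactly what is missing for \text{\rm o}-convergence, where the ``regulator'' is a net $g_\beta$ rather than a scalar multiple of a fixed vector; in general there is no single order bound dominating $T_{s_0-h}g_\beta$ across all $h$, and for this reason only right order continuity is asserted in $ii)$.
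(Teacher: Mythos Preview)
Your proof is correct and follows essentially the same route as the paper: the same factorisations $T_{s_0+h}x-T_{s_0}x=T_{s_0}(T_hx-x)$ and $T_{s_0-h}x-T_{s_0}x=T_{s_0-h}(x-T_hx)$, the same use of \text{\rm ru}-continuity of order bounded operators for right continuity, the same appeal to Theorem~\ref{ob-ruc}$\,i)$ to dominate the moving family $\{T_{s_0-h}w\}$ by a single vector in the positive case, and the same positivity/order-continuity argument for $ii)$. The only cosmetic difference is that you spell out inline why an order bounded operator is \text{\rm ru}-continuous, whereas the paper cites this as \cite[Lemma~3.2]{EEG2024}.
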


\begin{proof}
$i)$\
Let $s_0\ge 0$, $h\downarrow 0$, and $x\in X$. Then $\big\{T_sx\big\}_{0\le s\le s_0+1}\subseteq[-u,u]$ for some $u\in X$
by Theorem \ref{ob-ruc}. The operator $T_{s_0}$ is \text{\rm ru}-continuous by \cite[Lemma 3.2]{EEG2024}, 
and hence $T_{s_0+h}x-T_{s_0}x=T_{s_0}(T_hx-x)\convr 0$.
Therefore, the mapping $s\to T_sx$ is right \text{\rm ru}-continuous at $s_0$.

Suppose additionally that $(T_s)_{s\ge 0}$ is positive. 
Since $(x-T_hx)\convr 0$  then $\pm(x-T_hx)\le\frac{1}{n}v$ for $0\le h\le h_n$
for some $v\in X$ and a sequence $h_n\downarrow 0$ in $[0,s_0]$. 
Then, $\pm T_s(x-T_hx)\le\frac{1}{n}T_sv$ for all $0\le h\le h_n$ and $s\ge 0$.
Theorem \ref{ob-ruc} implies $\big\{T_sv\big\}_{0\le s\le s_0}\subseteq[-w,w]$ for some $w\in X$. Consequently,
$\pm T_s(x-T_hx)\le\frac{1}{n}T_sv\le\frac{1}{n}w$ for all $0\le h\le h_n$ and $0\le s\le s_0$.
In particular, $\pm(T_{s_0-h}x-T_{s_0}x)=\pm T_{s_0-h}(x-T_hx)\le\frac{1}{n}w$ for $0\le h\le h_n$.
So, $T_{s_0-h}x\convr T_{s_0}x(w)$, and hence $s\to T_sx$ is left \text{\rm ru}-continuous at $s_0$.
As positive operators are order bounded, it was shown already that $s\to T_sx$ is right \text{\rm ru}-continuous at $s_0$.
Thus, $(T_s)_{s\ge 0}$ is \text{\rm ruc}.

\medskip
$ii)$\
Let $s_0\ge 0$, $h\downarrow 0$ and $x\in X$. Then $T_hx\convo x$. Find $g_\beta\downarrow 0$ in $X$ and,
for every $\beta$ some $\varepsilon_\beta>0$ such that $\pm(T_hx-x)\le g_\beta$ for all $h\in[0,\varepsilon_\beta]$. 
Since $T_{s_0}$ is positive, 
$$
   \pm(T_{s_0+h}x-T_{s_0}x)=\pm T_{s_0}(T_hx-x)\le T_{s_0}g_\beta
$$
for $h\in[0,\varepsilon_\beta]$.
Since $T_{s_0}$ is order continuous then $T_{s_0}g_\beta\downarrow 0$, 
and hence the mapping $s\to T_sx$ is right order continuous at $s_0$.
\end{proof}

\noindent
The next consequence of Theorem \ref{loc-ruc} generalizes \cite[Proposition 3.5]{KK2020} to the OVS setting.

\begin{corollary}\label{loc-ruc-gen}
Let $(T_s)_{s\ge 0}$ be a positive semigroup an OVS $X$ with a generating cone such that $T_{h}x\convr x$ as $h\downarrow 0$ for each $x\in X_+$.
Then $(T_s)_{s\ge 0}$ is \text{\rm ruc}.
\end{corollary}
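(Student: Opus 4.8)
The plan is to reduce the statement to Theorem~\ref{loc-ruc}$(i)$, whose concluding clause asserts that a positive, order bounded, relatively uniformly continuous at zero semigroup is automatically \text{\rm ruc}. Before that result can be invoked, two hypotheses must be secured: that $(T_s)_{s\ge 0}$ consists of order bounded operators, and that it is genuinely \text{\rm ruc} at zero in the sense of Definition~\ref{def of r-cont semigroup}$(b)$, that is $T_hx\convr x$ as $h\downarrow 0$ for \emph{every} $x\in X$ and not merely for $x\in X_+$.

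The first hypothesis is immediate. A positive operator $T$ carries each order interval $[a,b]$ into $[Ta,Tb]$, since $a\le y\le b$ forces $Ta\le Ty\le Tb$; hence every positive operator is order bounded, and in particular so is each $T_s$.

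The main, and essentially only nontrivial, step is to upgrade the hypothesis from the cone to the whole space. Since $X_+$ is generating, an arbitrary $x\in X$ can be written as $x=x_1-x_2$ with $x_1,x_2\in X_+$. By assumption $T_hx_i\convr x_i$ as $h\downarrow 0$ for $i=1,2$, so there are regulators $u_1,u_2\in X$ and decreasing sequences $h_n^{(1)},h_n^{(2)}\downarrow 0$ with $\pm(T_hx_i-x_i)\le\frac{1}{n}u_i$ whenever $0<h\le h_n^{(i)}$. Setting $u:=u_1+u_2$ and $h_n:=\min\{h_n^{(1)},h_n^{(2)}\}$ and using the identity $T_hx-x=(T_hx_1-x_1)-(T_hx_2-x_2)$, one obtains $\pm(T_hx-x)\le\frac{1}{n}u$ for $0<h\le h_n$, so that $T_hx\convr x(u)$. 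Thus $(T_s)_{s\ge 0}$ is \text{\rm ruc} at zero on all of $X$.

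With both hypotheses in hand, Theorem~\ref{loc-ruc}$(i)$ applies directly and delivers that $(T_s)_{s\ge 0}$ is \text{\rm ruc}. I expect the only delicate point to be the bookkeeping in the previous paragraph, namely that the difference of two nets \text{\rm ru}-converging to $0$ again \text{\rm ru}-converges to $0$ with the sum of the regulators once the two defining sequences $h_n^{(i)}$ are aligned by passing to their minimum. This is precisely the place where the generating-cone hypothesis is essential, since it is what permits the decomposition $x=x_1-x_2$ that transfers the cone-only assumption to every element of $X$.
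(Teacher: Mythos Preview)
Your proof is correct and follows essentially the same route as the paper: both first upgrade the hypothesis from $X_+$ to all of $X$ via the decomposition $x=x_1-x_2$ afforded by the generating cone, and then invoke Theorem~\ref{loc-ruc}$(i)$ (the paper leaves the order boundedness of positive operators implicit, since it was already noted inside the proof of that theorem). Your version simply spells out the regulator bookkeeping that the paper compresses into a single line.
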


\begin{proof}
By Theorem \ref{loc-ruc}, it suffices to show that $(T_s)_{s\ge 0}$ is \text{\rm ruc} at zero.
Assume $x\in X$ and $h\downarrow 0$. Then $x=y-z$ with $y,z\in X_+$. It follows from $T_hy\convr y$ and $T_hz\convr z$ 
that $T_hx=T_hy-T_hz\convr y-z=x$.
\end{proof}

\section{\large Localization at zero of the property to be ruc for positive semigroups on ordered vector spaces}

\hspace{4mm}
We shall use the following OVS version of \cite[Definition VI.5.1]{V1967}.

\begin{definition}\label{R-property}
An OVS satisfies the condition \text{\rm(R)} $($briefly, $X\in\text{\rm(R)}$$)$ if for each sequence $(y_k)$ in $X$ 
there exist $y\in X$ and a sequence $(\lambda_k)$ in $\mathbb{R}\setminus\{0\}$ such that $\pm\lambda_k y_k\le y$ for every $k$.
\end{definition}

\noindent
The condition \text{\rm(R)} is also called the $\sigma$-property \cite[Definition 70.1]{LZ1971}.
For examples of VLs satisfying the condition \text{\rm(R)} we refer to \cite{V1967,LZ1971,KK2020}.
The following lemma can be seen as an extension of \cite[Proposition 5.2]{KK2020} to the OVS setting.

\begin{lemma}\label{C-prop}
Let $X$ be an OVS with a generating cone. The following assertions are equivalent.
\begin{enumerate}[$i)$]
\item\
$X\in\text{\rm(R)}$.
\item\
For each sequence $(y_k)$ in $X_+$ there exist $y\in X$ and a sequence $(\lambda_k)$ in $\mathbb{R}_+\setminus\{0\}$ such that $\lambda_k y_k\le y$ for every $k$.
\item\
Any countable set of \text{\rm ru}-convergent nets in $X$ has a common regulator
\item\
Any countable set of \text{\rm ru}-convergent sequences in $X$ has a common regulator.
\end{enumerate}
\end{lemma}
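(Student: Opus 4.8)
The plan is to prove the cyclic chain of implications $i)\Rightarrow ii)\Rightarrow iv)\Rightarrow iii)\Rightarrow i)$, since $iii)$ and $iv)$ are the two substantive reformulations and the remaining links are short. The implication $i)\Rightarrow ii)$ should be essentially immediate: given positive elements $y_k$, condition \text{\rm(R)} hands us $y\in X$ and nonzero scalars $\mu_k$ with $\pm\mu_k y_k\le y$; replacing $\mu_k$ by $|\mu_k|$ and using $y_k\ge 0$ gives $|\mu_k|\,y_k=\pm\mu_k y_k\le y$, so we may take $\lambda_k:=|\mu_k|>0$.

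For $ii)\Rightarrow iv)$, suppose we are given countably many \text{\rm ru}-convergent sequences; say the $k$-th one has regulator $u_k\in X$. Using that $X_+$ is generating, write each $u_k=a_k-b_k$ with $a_k,b_k\in X_+$ and note that $\pm u_k\le a_k+b_k=:y_k\in X_+$, so each $y_k$ dominates the $k$-th regulator. Apply $ii)$ to the sequence $(y_k)$ in $X_+$ to obtain $y\in X$ and scalars $\lambda_k>0$ with $\lambda_k y_k\le y$. Then $\lambda_k u_k\le\lambda_k y_k\le y$ and likewise $-\lambda_k u_k\le y$, so $\pm u_k\le\lambda_k^{-1}y$. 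I would then argue that $y$ is a common regulator: if the $k$-th sequence \text{\rm ru}-converges with regulator $u_k$, then for any $\varepsilon>0$ it is eventually bounded by $\varepsilon u_k\le\varepsilon\lambda_k^{-1}y$; absorbing the fixed positive constant $\lambda_k^{-1}$ into the $\tfrac1n$ estimate (choosing $n$ large relative to $\lambda_k^{-1}$) shows convergence with regulator $y$ as well. The step $iv)\Rightarrow iii)$ is where I invoke the sequential nature of \text{\rm ru}-convergence noted after Definition \ref{def of subsets of OVS}: a net that \text{\rm ru}-converges does so along an increasing sequence of indices with a single regulator, so each of the countably many nets is captured by a regulator exactly as a sequence would be, and $iv)$ then supplies a common one.

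Finally $iii)\Rightarrow i)$ (or equally $iv)\Rightarrow i)$) recovers condition \text{\rm(R)} from the common-regulator property: given an arbitrary sequence $(y_k)$ in $X$, I would manufacture \text{\rm ru}-null sequences out of it — for instance the constant-type data where the $k$-th sequence is $(\tfrac1n y_k)_{n}$, which \text{\rm ru}-converges to $0$ with regulator $y_k$. A common regulator $y$ then satisfies, for each $k$, an estimate $\pm\tfrac1n y_k\le\tfrac1m y$ eventually; extracting a single valid pair of indices yields $\pm\lambda_k y_k\le y$ for suitable $\lambda_k\in\mathbb{R}\setminus\{0\}$, which is precisely \text{\rm(R)}.

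The main obstacle I anticipate is the bookkeeping in $ii)\Rightarrow iv)$ (and its mirror in $iii)\Rightarrow i)$): one must be careful that a \emph{common regulator} in the sense of Definition \ref{def of convergences in OVS}$b)$ really follows from the domination $\pm u_k\le\lambda_k^{-1}y$, because the definition of \text{\rm ru}-convergence quantifies the bound as $\tfrac1n$ times the regulator along an increasing sequence of indices, and swapping the regulator $u_k$ for $y$ rescales the estimate by the fixed factor $\lambda_k^{-1}$. The point is that this rescaling is harmless — for any target $\tfrac1n$ one simply passes further along the index sequence — but making this precise requires the generating-cone hypothesis (to split regulators into positive parts) together with an Archimedean-free absorption of constants, and this is the only place where the argument is more than formal.
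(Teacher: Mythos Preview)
Your argument is correct and uses the same ingredients as the paper --- the generating-cone decomposition to pass between arbitrary and positive elements, the absorption of scalar factors into the $\tfrac1n$ estimate, and the device of manufacturing \text{\rm ru}-null sequences $(\tfrac1n u_k)_n$ from given regulators to extract domination relations. The only difference is organizational: the paper proves $i)\Leftrightarrow ii)$ separately and then runs the cycle $i)\Rightarrow iii)\Rightarrow iv)\Rightarrow i)$, using the trivial direction $iii)\Rightarrow iv)$, whereas you run $i)\Rightarrow ii)\Rightarrow iv)\Rightarrow iii)\Rightarrow i)$ and therefore have to argue the non-trivial direction $iv)\Rightarrow iii)$. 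Your reduction there (the \text{\rm ru}-convergence of a net is encoded entirely in its regulator, so manufacture a sequence with the same regulator and invoke $iv)$) is sound.

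Two small bookkeeping points, both of which you anticipate. In $ii)\Rightarrow iv)$ the element $y$ delivered by $ii)$ need not lie in $X_+$, yet you need $y\ge 0$ to pass from $\pm(\cdot)\le\tfrac{\lambda_k^{-1}}{m}y$ to $\pm(\cdot)\le\tfrac1n y$; this is fixed by replacing $y$ with any positive dominant, available since the cone is generating. And in your final step the sequence $(\tfrac1n y_k)_n$ does not \text{\rm ru}-converge to $0$ with regulator $y_k$ unless $y_k\ge 0$; the correct regulator is any $u_k\in X_+$ with $\pm y_k\le u_k$, which is precisely the adjustment the paper makes in its proof of $iv)\Rightarrow i)$.
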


\begin{proof}
$i)\Longrightarrow ii)$\ 
It is trivial.

\medskip
$ii)\Longrightarrow i)$\ 
Let $(y_k)$ be a sequence in $X$. Since $X_+-X_+=X$, there exist sequences $(z'_k)$ and $(z''_k)$ in $X_+$ satisfying
$y_k=z'_k-z''_k$. Take $z',z''\in X$ and sequences 
$(\lambda'_k)$ and $(\lambda''_k)$ in $\mathbb{R}_+\setminus\{0\}$ such that $\lambda'_k z'_k\le z'$ and $\lambda''_k z''_k\le z''$ for all $k$.
Let $y=z'+z''$ and $\lambda_k=\min(\lambda'_k,\lambda''_k)\in\mathbb{R}_+\setminus\{0\}$ for each $k$. It follows from
$$
   -z'-z''\le\min(\lambda'_k,\lambda''_k)(-z'_k-z''_k)\le\min(\lambda'_k,\lambda''_k)(z'_k-z''_k)=
$$
$$
   \lambda_ky_k\le\min(\lambda'_k,\lambda''_k)(z'_k+z''_k)\le z'+z''
$$
that $\pm\lambda_k y_k\le y$ for every $k$.

\medskip
$i)\Longrightarrow iii)$\ 
Let $x_{\alpha^{(k)},k}\convr x_k(y_k)$ for each $k\in\mathbb{N}$ as $\alpha^{(k)}\to\infty$.
Take $y\in X$ and a sequence $(\lambda_k)$ in $\mathbb{R}\setminus\{0\}$ with $\pm\lambda_k y_k\le y$ for all $k$.
Then $\pm y_k\le\frac{1}{|\lambda_k|}y$, and hence $x_{\alpha^{(k)},k}\convr x_k(y)$ as $\alpha^{(k)}\to\infty$ for each $k$.

\medskip
$iii)\Longrightarrow iv)$\ 
It is trivial.

\medskip
$iv)\Longrightarrow i)$\ 
Let $(y_k)$ be a sequence in $X$. Since $X_+-X_+=X$, there exists a sequence $(u_k)$ in $X_+$ with $\pm y_k\le u_k$. 
Since $n^{-1}u_k\convr 0(u_k)$ as $n\to\infty$ for each $k$, there exists $u\in X_+$ such that $n^{-1}u_k\convr 0(u)$
 for each $k$ as $n\to\infty$. Thus, for each $k$ there exists $n_k$ with $\pm n_k^{-1}u_k\le u$. Letting $\lambda_k=n_k^{-1}$
completes the proof.
\end{proof}

\medskip
In order to deal with localization at zero of the property to be \text{\rm ruc} in the class of positive semigroup on VLs, 
Kandi\'{c} and Kaplin introduce the property \text{\rm(D)} of VLs \cite[Definition 5.1]{KK2020}. We slightly modify
their definition by restricting it to positive semigroups rather than net and extent it to the OVS setting as follows.   

\medskip
\begin{definition}\label{Def.5.1-KK2020}
An OVS $X$ has the property \text{\rm(D)} $($briefly, $X\in\text{\rm(D)}$$)$ if for each positive semigroup $(T_s)_{s\ge 0}$ on $X$, 
$(T_s)_{s\ge 0}$ is \text{\rm ruc} whenever the following two conditions are satisfied.
\begin{enumerate}[$a)$]
\item\
There exists a \text{\rm ru}-dense set $D\subseteq X$ such that $T_hy\convr y$ as $h\downarrow 0$ for each $y\in D$.
\item\
If $x_n\convr 0$ then $\big(T_hx_n-x_n\big)_{(h,n)\in(0,\infty)\times\mathbb{N}}\convr 0$.
\end{enumerate}
\end{definition}

\noindent
\smallskip
It was shown in \cite{KK2020} that several important VLs (e.g., $\text{\rm L}^p(\mathbb{R})\ (0<p<\infty)$, $\text{\rm C}_c(\mathbb{R})$, 
$\text{\rm Lip}(\mathbb{R})$, $\text{\rm UC}(\mathbb{R})$, and $\text{\rm C}(\mathbb{R})$) have the property \text{\rm(D)}. 
The following proposition generalizes the implication $(ii)\Longrightarrow(iii)$ of \cite[Theorem 5.4]{KK2020} to the OVS setting.
The idea of our proof below is motivated by the idea the proof of Theorem 5.4 in \cite{KK2020}.

\smallskip
\begin{theorem}\label{R implies D}
Let  $X$ be an OVS with a generating cone. Then\\ $X\in\text{\rm(R)}\Longrightarrow X\in\text{\rm(D)}$. 
\end{theorem}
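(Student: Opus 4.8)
The plan is to show that every positive semigroup $(T_s)_{s\ge 0}$ satisfying conditions $a)$ and $b)$ of Definition \ref{Def.5.1-KK2020} is \text{\rm ruc} at zero, and then to upgrade this to full \text{\rm ruc}-ness by Theorem \ref{loc-ruc}$(i)$: since positive operators are order bounded, a positive \text{\rm ruc} at zero semigroup falls under its hypotheses and is therefore \text{\rm ruc}. Thus the substance of the argument is to prove $T_hx\convr x$ as $h\downarrow 0$ for every $x\in X$. Fix such an $x$. Using \text{\rm ru}-density of $D$ together with the sequentiality of \text{\rm ru}-convergence noted after Definition \ref{def of subsets of OVS}, I would choose a sequence $(y_n)$ in $D$ with $y_n\convr x$, and set $z_n:=x-y_n$, so that $z_n\convr 0$.

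The argument then rests on the elementary identity
$$
   T_hx-x=(T_hz_n-z_n)+(T_hy_n-y_n),
$$
valid for every $n$ and every $h$ by linearity, which separates the target into a summand controlled by condition $b)$ and one controlled by condition $a)$. Applying $b)$ to the null sequence $(z_n)$ produces a regulator $w$ and, for each $m$, a threshold $(\delta_m,N_m)$ such that $\pm(T_hz_n-z_n)\le\frac1m w$ whenever $0<h\le\delta_m$ and $n\ge N_m$. For the second summand, condition $a)$ gives, for each fixed $n$ (note $y_n\in D$), that $T_hy_n\convr y_n$ as $h\downarrow 0$; a priori, however, the regulators of these countably many convergences depend on $n$.

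This dependence is exactly where the hypothesis $X\in\text{\rm(R)}$ is needed, and I expect it to be the only genuinely delicate step. By the equivalence $i)\Leftrightarrow iii)$ in Lemma \ref{C-prop}, the countable family of \text{\rm ru}-convergent nets $(T_hy_n)_{h\downarrow 0}\convr y_n$, $n\in\mathbb{N}$, admits a \emph{common} regulator $v$: for each $n$ and each $m$ there is $\eta_{n,m}>0$ with $\pm(T_hy_n-y_n)\le\frac1m v$ for $0<h\le\eta_{n,m}$. Without such a uniform $v$ the individual regulators could grow with $n$, and the diagonal estimate below would collapse; securing the common regulator is the crux of the proof.

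With $w$ and $v$ fixed, the conclusion follows by a diagonal choice. Given $m$, set $n:=N_m$ and $\delta'_m:=\min(\delta_m,\eta_{N_m,m})$, passing to smaller values if necessary so that $(\delta'_m)$ decreases. For $0<h\le\delta'_m$ the displayed identity and the two estimates give
$$
   \pm(T_hx-x)\le\tfrac1m w+\tfrac1m v=\tfrac1m(w+v),
$$
so $T_hx\convr x$ with regulator $w+v$ as $h\downarrow 0$. As $x\in X$ was arbitrary, $(T_s)_{s\ge 0}$ is \text{\rm ruc} at zero, and Theorem \ref{loc-ruc}$(i)$ then yields that it is \text{\rm ruc}; hence $X\in\text{\rm(D)}$.
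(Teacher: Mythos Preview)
Your argument is correct and follows essentially the same route as the paper: approximate $x$ by a sequence from $D$, split $T_hx-x$ into a term controlled by condition~$b)$ and one controlled by condition~$a)$, and invoke $X\in(\mathrm R)$ via Lemma~\ref{C-prop} to obtain a common regulator for the countably many convergences $T_hy_n\convr y_n$ before making a diagonal choice. Two minor differences: you bound $T_hz_n-z_n$ directly from~$b)$ (yielding a two-term regulator $w+v$), whereas the paper splits this further into $T_h(x_n-x)$ and $x_n-x$ (three-term regulator $u+v+w$); and you explicitly invoke Theorem~\ref{loc-ruc}$(i)$ to pass from \text{\rm ruc} at zero to \text{\rm ruc}, a step the paper's proof leaves implicit even though Definition~\ref{Def.5.1-KK2020} requires full \text{\rm ruc}.
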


\smallskip
\begin{proof}
Let $X\in\text{\rm(R)}$ and suppose that a semigroup $(T_s)_{s\ge 0}$ on $X$ satisfies $a)$ and $b)$ of Definition \ref{Def.5.1-KK2020}.
In particular, $(T_hy-y)\convr 0$ as $h\downarrow 0$ for each $y$ belonging to an \text{\rm ru}-dense subset $D$ of $X$.
We need to prove that $T_{h}x\convr x$ as $h\downarrow 0$ for every $x\in X$. 

Let $x\in X$ and take
$(x_n)$ in $D$, $(x_n-x)\convr 0(u)$ as $n\to\infty$. Then, for each $k\in\mathbb{N}$ there exists $n'_k$ with 
$$
   \pm (x_n-x)\le\frac{1}{k}u \ \ \ \ \ \ \ (\forall n\ge n'_k).
   \eqno(3)
$$
By condition $b)$ of Definition \ref{Def.5.1-KK2020}, $\big(T_h(x_n-x)-(x_n-x)\big)_{(h,n)\in(0,\infty)\times\mathbb{N}}\convr 0$.
Since $(x_n-x)\convr 0$ then $\big(x_n-x\big)_{(h,n)\in(0,\infty)\times\mathbb{N}}\convr 0$, and hence
$\big(T_h(x_n-x)\big)_{(h,n)\in(0,\infty)\times\mathbb{N}}\convr 0$. Therefore, there exists $v\in X_+$ such that, 
for each $k\in\mathbb{N}$ there exist $n_k\ge n'_k$ and $h'_k>0$ with 
$$
   \pm T_h(x_n-x)\le\frac{1}{k}v \ \ \ \ \ \ \ (\forall n\ge n_k)(\forall h\in[0,h'_k]).
   \eqno(4)
$$
It follows from condition $a)$ of Definition \ref{Def.5.1-KK2020} that $T_hx_n\convr x_n$ as $h\downarrow 0$ for every $n$. 
Since $X\in\text{\rm(R)}$, Lemma \ref{C-prop}
implies that there exists $w\in X_+$ such that $(T_hx_n-x_n)\convr 0(w)$ as $h\downarrow 0$ for every $n$.
For each $k\in\mathbb{N}$ there exists $h''_k$, $0<h''_k\le h'_k$ with 
$$
   \pm(T_hx_{n_k}-x_{n_k})\le\frac{1}{k}w\ \ \ \ \ \ \ (\forall h\in[0,h''_k]).
   \eqno(5)
$$
Take any $h_k$, $0<h_k\le h''_k$. In view of (3), (4), and (5), for each $k\in\mathbb{N}$ 
$$
   -\frac{1}{k}(v+u+w)\le\big(T_h(x-x_{n_k})-(x-x_{n_k})\big)+\big(T_hx_{n_k}-x_{n_k}\big)=
$$
$$
   T_hx-x=\big(T_h(x-x_{n_k})-(x-x_{n_k})\big)+\big(T_hx_{n_k}-x_{n_k}\big)\le\frac{1}{k}(v+u+w)
$$
whenever $0\le h\le h_k$. Hence, $T_hx\convr x(v+u+w)$ as $h\downarrow 0$.
\end{proof}

\noindent
Many of \text{\rm ru}-complete VLs possessing the property \text{\rm(D)} have also the property \text{\rm(R)} \cite{KK2020}.
Accordingly to \cite[Theorems 2 and 3]{E2025}, every positive \text{\rm ruc} semigroup $(T_s)_{s\ge 0}$ on an Archimedean VL $X$ 
with the property \text{\rm(R)} has unique positive \text{\rm ruc} extension to the semigroup $(T^r_s)_{s\ge 0}$ on 
the \text{\rm ru}-completion $X^r$ of $X$. In the OVS setting \text{\rm ru}-completions are not available, so there is no reasonable hope
to drop \text{\rm ru}-completeness in attempts to extend results of \cite{KK-F2020} to OVSs. The following examples show that some of OVSs
commonly used in applications are not \text{\rm ru}-complete yet have the property \text{\rm(R)}.

\begin{example}\label{OBS with (R)}
Consider the OBS $X=C^n[0,1]$ of $n$-times continuously differentiable functions on $[0,1]$.
Since $C^n[0,1]$ is not complete in the sup-norm, $X$ is not \text{\rm ru}-complete. From the other hand, 
since $1_{[0,1]}$ is an order unit of $C^n[0,1]$ then $X\in\text{\rm(R)}$.
By the same reason, $X_+$ it generating. 
\end{example}

\begin{example}\label{OVS with (R)}
Let $X=C^\infty(\mathbb{R}^k,\mathbb{R})$ be an OVS of infinitely many times differentiable $\mathbb{R}$-valued functions of $k$ varables.
It can be easily seen that $X$ it is not \text{\rm ru}-complete and satisfies the condition \text{\rm(R)}.
\end{example}

\medskip
Next, we give the following localization at zero of \text{\rm ruc}-property that generalizes \cite[Theorem 5.7]{KK2020} to the OVS setting.

\smallskip
\begin{theorem}\label{Thm.5.7-KK2020}
Let $X$ be an OVS with a generating cone such that $X\in\text{\rm(D)}$. 
Then a positive semigroup $(T_s)_{s\ge 0}$ on $X$ is \text{\rm ruc} if and only if the following two assertions hold.
\begin{enumerate}[$i)$]
\item\
There exists an \text{\rm ru}-dense set $D\subseteq X$ such that $T_hy\convr y$ as $h\downarrow 0$ for each $y\in D$.
\item\
The semigroup $(T_s)_{s\ge 0}$ is order bounded at zero.
\end{enumerate}
\end{theorem}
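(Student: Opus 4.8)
The plan is to prove the equivalence by establishing the easy necessity direction directly and then reducing the sufficiency direction to the already-available property $\text{\rm(D)}$ of $X$. For necessity, suppose $(T_s)_{s\ge 0}$ is \text{\rm ruc}. Putting $s=0$ in Definition \ref{def of r-cont semigroup}$(a)$ gives $T_hx\convr x$ as $h\downarrow 0$ for every $x\in X$, so the semigroup is \text{\rm ruc} at zero; choosing $D=X$, which is trivially \text{\rm ru}-dense, yields $(i)$. For $(ii)$ I would read off from $T_hx\convr x$ a regulator $u_x\in X$ and a threshold $\varepsilon_x>0$ with $\pm(T_hx-x)\le u_x$ for $0\le h\le\varepsilon_x$; then $\{T_hx\}_{0\le h\le\varepsilon_x}\subseteq[x-u_x,x+u_x]$ is order bounded, and since $x$ is arbitrary $(T_s)_{s\ge 0}$ is order bounded at zero.

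For sufficiency, assume $(i)$ and $(ii)$. Since $X\in\text{\rm(D)}$ and $(T_s)_{s\ge 0}$ is positive, it suffices to verify conditions $a)$ and $b)$ of Definition \ref{Def.5.1-KK2020} and then invoke $\text{\rm(D)}$. Condition $a)$ is verbatim hypothesis $(i)$, so the entire content is condition $b)$: that $x_n\convr 0$ forces $(T_hx_n-x_n)_{(h,n)\in(0,\infty)\times\mathbb{N}}\convr 0$.

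To establish $b)$, I would fix $x_n\convr 0$, choose a regulator $u$ and an increasing sequence $(m_k)$ of indices with $\pm x_n\le\frac1k u$ for $n\ge m_k$, and note that adding the inequalities $x_n\le\frac1k u$ and $-x_n\le\frac1k u$ forces $u\in X_+$. By Proposition \ref{localization at zero of obz in OVS}, order boundedness at zero of the positive semigroup yields the collective bound $\bigcup_{0\le s\le 1}T_s[-u,u]\subseteq[-v,v]$ for some $v\in X_+$. A scaling step then finishes it: for $n\ge m_k$ one has $kx_n\in[-u,u]$, hence $kT_hx_n\in[-v,v]$, so $\pm T_hx_n\le\frac1k v$ for all $0\le h\le 1$; combined with $\pm x_n\le\frac1k u$ this gives $\pm(T_hx_n-x_n)\le\frac1k(u+v)$ for $n\ge m_k$ and $0\le h\le 1$. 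Since for each $k$ the set $\{(h,n):0<h\le 1,\ n\ge m_k\}$ is a tail of the double net, this is exactly \text{\rm ru}-convergence to $0$ with regulator $u+v$, so $b)$ holds and $\text{\rm(D)}$ applies.

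I expect the only real obstacle to be condition $b)$, and within it the passage from the pointwise hypothesis $(ii)$ to a single order bound uniform in both $n$ and $h$ near $0$; this is precisely what the equivalence $(i)\Leftrightarrow(iii)$ in Proposition \ref{localization at zero of obz in OVS}, together with positivity, provides. The remaining steps — identifying $a)$ with $(i)$, the sign bookkeeping that makes the regulator positive, and the final scaling — are routine, and the conclusion follows immediately from the definition of $\text{\rm(D)}$.
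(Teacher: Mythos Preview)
Your proposal is correct and follows essentially the same route as the paper. The only cosmetic differences are: for necessity of $(ii)$ the paper cites Theorem~\ref{ob-ruc}~$i)$ (using that positive operators are order bounded) whereas you extract the order bound directly from the regulator of $T_hx\convr x$; and for condition~$b)$ the paper applies the raw definition of order boundedness at zero to the regulator $y$ to get $T_hy\le z$ on some $[0,h_0]$, whereas you invoke the equivalent collective form from Proposition~\ref{localization at zero of obz in OVS} on $[0,1]$ --- both lead to the same estimate $\pm(T_hx_n-x_n)\le\frac1k(\text{regulator})$ on a tail of the double net.
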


\begin{proof}
For the necessity, take $D=X$ in $i)$ and apply Theorem~\ref{ob-ruc}~$i)$.
 
\medskip
For the sufficiency, suppose $i)$ and $ii)$ hold. Since the assertion $i)$ coincides with the condition $a)$ of Definition \ref{Def.5.1-KK2020},
it is enough to check condition $b)$ of this definition. Let $x_n\convr 0$ in $X$, say $x_n\convr 0(y)$.
So, there exists an increasing sequence $(k_n)$ with $\pm x_n\le\frac{1}{n}y$ for all $k\ge k_n$.
It follows from $ii)$ that there exist $z\in X$ and $h_0>0$ such that $0\le T_hy\le z$ and all $h\in[0,h_0]$. Therefore,
for all $n\ge n_\varepsilon$ and $h\in[0,h_0]$ we have
$$
   -\frac{1}{n}\Big(z+y\Big)\le-\frac{1}{n}T_hy-\frac{1}{n}y\le T_hx_n-x_n\le\frac{1}{n}T_hy+\frac{1}{n}y\le\frac{1}{n}\Big(z+y\Big),
$$
and hence $(T_hx_n-x_n)_{(h,n)\in(0,\infty)\times \mathbb{N}}\convr 0(z+y)$.
\end{proof}

\section{\large Conditions under which a positive $C_0$-semigroup on an OBS is ruc}

\hspace{4mm}
Gl\"{u}ck and Kaplin proved that a positive $C_0$-semigroup on a BL is \text{\rm ruc} 
if and only if it is order bounded at zero \cite[Theorem 2.1]{GK2024}. It is worth to mention that the condition of \cite[Theorem 2.1]{GK2024} 
on a semigroup to be $C_0$ is essential (for instance, the right translation semigroup on $L^{\infty}({\mathbb R})$ 
is not a strongly continuous at zero, and hence is not \text{\rm ruc} at zero, yet it is order bounded at zero).

\medskip
In order to extend \cite[Theorem 2.1]{GK2024} to the OBS setting, we need several lemmas. 
All integrals below are understood in the Bochner sense.

\begin{lemma}\label{localization at zero 1}
Let $(T_s)_{s\ge 0}$ be a positive \text{\rm ruc} at zero semigroup on an almost Archime\-dean OVS $X$.
Then $(T_s)_{s\ge 0}$ is order continuous at zero.
\end{lemma}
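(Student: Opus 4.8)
The plan is to prove the conclusion in its raw form: for every $x\in X$ one must exhibit a net $g_\beta\downarrow 0$ with $\pm(T_hx-x)\le g_\beta$ eventually as $h\downarrow 0$, i.e.\ $T_hx\convo x$. First I would unwind the hypothesis: since $(T_s)_{s\ge0}$ is \text{\rm ruc} at zero, $T_hx\convr x$, so there are a regulator $u$ and a sequence $h_n\downarrow 0$ with $\pm(T_hx-x)\le\frac{1}{n}u$ for $0<h\le h_n$; adding the two inequalities shows $u\in X_+$. In an Archimedean space one is already done, because then $\frac{1}{n}u\downarrow 0$ and this sequence serves as the dominating net, so that \text{\rm ru}-convergence forces \text{\rm o}-convergence. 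The whole point of the lemma is to reach the same conclusion when $X$ is only almost Archimedean, and this is exactly where positivity must enter.

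The core step is therefore to manufacture an order-null dominating family out of the relatively uniform estimate. I would first isolate the following elementary domination principle: if $z_n\in X_+$, $z_n\convo 0$, and $\pm y_h\le z_n$ for $0<h\le h_n$, then $y_h\convo 0$ witnessed by the same net (for each $\beta$ pick $n$ with $\pm z_n\le g_\beta$ and then restrict to $h\le h_n$). Applying this with $y_h=T_hx-x$ and $z_n=\frac{1}{n}u$ reduces the lemma to the single assertion that $\frac{1}{n}u\convo 0$ for the positive regulator $u$. To obtain this in the merely almost-Archimedean setting I would use the positivity of the semigroup together with the almost-Archimedean axiom (vanishing of two-sided infinitesimals), propagating the estimate through the positive operators $T_s$ and invoking the semigroup law to control $T_su$ on a fixed interval $0\le s\le\varepsilon$; this is meant to substitute for the one-sided Archimedean property that would otherwise give $\frac{1}{n}u\downarrow 0$ for free.

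The hard part is precisely this last step: in a space that is only almost Archimedean the decreasing sequence $\frac{1}{n}u$ need not have infimum $0$, so the passage from the \text{\rm ru}-estimate to an honest order-null net cannot be purely order-theoretic and must genuinely consume the positivity hypothesis. Everything else—the extraction of the regulator, the reduction to the positive cone, and the domination principle—is routine; the argument stands or falls on showing that, for a positive \text{\rm ruc} at zero semigroup, the regulator family is order null, and I would concentrate the effort there.
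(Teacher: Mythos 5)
Your reduction coincides with the paper's: extract a positive regulator $u$ and a sequence $h_n\downarrow 0$ with $\pm(T_hx-x)\le\frac{1}{n}u$ for $h\in[0,h_n]$, and note (your ``domination principle'') that the lemma follows once $\frac{1}{n}u\downarrow 0$. The genuine gap is that you never prove this last statement. You explicitly defer it --- ``I would concentrate the effort there'' --- and offer only a vague plan of ``propagating the estimate through the positive operators $T_s$'' and ``invoking the semigroup law''. Since this is, by your own account, the only nontrivial step, the proposal is a reduction plus an unexecuted plan, not a proof.

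Moreover, your diagnosis of where the difficulty lies does not match the paper, and the direction you point in is doubtful. The paper's proof makes no use of the positivity of the semigroup at this stage: it asserts that almost Archimedeanness of $X$ by itself yields $\frac{1}{n}u\downarrow 0$ for the positive regulator $u$, so the dominating sequence is order null for reasons concerning $X$ alone, not the operators $T_s$. Your proposed route is also not obviously workable as described: to verify $\inf_n\frac{1}{n}u=0$ one must handle an arbitrary lower bound $w\le\frac{1}{n}u$, for which only a one-sided estimate is available, whereas the almost Archimedean axiom acts on elements trapped in the two-sided intersection $\bigcap_{\varepsilon>0}\varepsilon[-u,u]$; applying positive operators to a one-sided inequality does not manufacture the required two-sided containment. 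So the step you postpone is both the entire content of the lemma and aimed in a direction different from (and less direct than) the paper's.
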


\begin{proof}
Let $x\in X$. There exist $u\in X$ and a sequence $h_n\downarrow 0$ satisfying  
$-\frac{1}{n}u\le T_hx-x\le\frac{1}{n}u$ for all $n\in\mathbb{N}$ and $h\in[0,h_n]$. 
Since $X$ is almost Archime\-dean, $\frac{1}{n}u\downarrow 0$, and hence $T_hx\convo x$ as $h\downarrow 0$.
\end{proof}

\begin{lemma}\label{normtoru}
Let $(x_n)$ be a sequence in an OBS $X$ with a closed generating cone and let $x\in X$. The following conditions are equivalent.
\begin{enumerate}[$i)$]
\item\
$x_n\convn x$.
\item\
Each subsequence $(x_{n_k})$ of $(x_n)$ contains further subsequence $(x_{n_{k_j}})$ with $x_{n_{k_j}}\convr x$.
\end{enumerate}
\end{lemma}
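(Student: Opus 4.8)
The plan is to treat the two implications separately, using the elementary metric-space ``subsequence principle'' for one direction and an explicit construction of a single common regulator for the other.

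For $(ii)\Rightarrow(i)$ I would invoke the subsequence principle: a sequence in a metric space converges to $x$ precisely when every subsequence admits a further subsequence converging to $x$. Thus it suffices to upgrade \text{\rm ru}-convergence to norm convergence. If $x_{n_{k_j}}\convr x(u)$, then $\pm(x_{n_{k_j}}-x)\le\frac{1}{j}u$ for large $j$, i.e. $x_{n_{k_j}}-x\in\frac{1}{j}[-u,u]$; since the cone of $X$ is normal the order interval $[-u,u]$ is norm-bounded, so $\|x_{n_{k_j}}-x\|$ is dominated by a multiple of $\frac{1}{j}\|u\|\to0$. This is exactly the implication ``\text{\rm ru}$\Rightarrow$ norm'' already exploited in the proof of Proposition \ref{ruc and oc to C_0}. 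Now, assuming $(ii)$, if $x_n\not\convn x$ then some subsequence stays at norm-distance $\ge\varepsilon$ from $x$, yet by hypothesis it contains a further subsequence \text{\rm ru}-converging to $x$, which by the above also converges to $x$ in norm --- a contradiction. Hence $x_n\convn x$.

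For $(i)\Rightarrow(ii)$ I would fix an arbitrary subsequence $(x_{n_k})$, which still converges to $x$ in norm, and pass to a rapidly converging further subsequence $(x_{n_{k_j}})$ with $\|x_{n_{k_j}}-x\|\le 4^{-j}$. Because $X_+$ is a closed generating cone in a Banach space, And\^o's decomposition theorem yields a constant $C$ and positive elements $y_j,z_j\in X_+$ with $x_{n_{k_j}}-x=y_j-z_j$ and $\|y_j\|+\|z_j\|\le C\,4^{-j}$. The candidate common regulator is
$$
   w:=\sum_{j=1}^{\infty}2^{j}(y_j+z_j),
$$
whose series converges in norm since $\sum_j 2^{j}\|y_j+z_j\|\le C\sum_j 2^{-j}<\infty$.

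The decisive step --- and the one I expect to be the main obstacle --- is to check that $w$ dominates each term, i.e. $y_j+z_j\le 2^{-j}w$. Here I would use closedness of the cone: each partial sum $S_N=\sum_{j=1}^{N}2^{j}(y_j+z_j)$ is positive with $S_N\le S_{N+1}$, and $w-S_N$ is a norm-limit of positive elements, hence positive, so $w\ge S_N\ge 2^{j}(y_j+z_j)$ for every $j\le N$. This gives $\pm(x_{n_{k_j}}-x)=\pm(y_j-z_j)\le y_j+z_j\le 2^{-j}w$, which is precisely $x_{n_{k_j}}\convr x(w)$, completing the equivalence. The routine ingredients (the subsequence principle, normality $\Rightarrow$ norm-bounded order intervals, and the validity of And\^o's theorem for a closed generating cone in a Banach space) I would simply cite; the only genuinely technical manoeuvre is manufacturing one regulator $w$ that simultaneously bounds all $x_{n_{k_j}}-x$ and using cone-closedness to guarantee the domination.
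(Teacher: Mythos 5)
Your proof is correct and follows essentially the same route as the paper's: for $(i)\Rightarrow(ii)$ both arguments use the open decomposition property of a closed generating cone (your And\^o citation, the paper's Krein--\v{S}mulian citation) to split the differences into positive parts of controlled norm along a rapidly convergent subsequence, and then assemble a single norm-convergent weighted series as a common regulator, while $(ii)\Rightarrow(i)$ is the standard subsequence principle combined with the implication from \text{\rm ru}-convergence to norm convergence. The only point worth flagging is that your justification of that last implication appeals to normality of the cone, which is not among the stated hypotheses of the lemma --- but the paper's own proof of this direction tacitly uses the same fact, so this is not a defect relative to the paper.
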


\begin{proof}
It suffices to consider the case when $x=0$. 

\medskip
$i)\Longrightarrow ii)$\ 
Let $x_n\convn 0$ and let $(x_{n_k})$ be a subsequence of $(x_n)$. Then $x_{n_k}\convn 0$.
By the M. Krein -- V. \v{S}mulian theorem (cf., \cite[Theorem 2.37]{AT2007}),
$\alpha B_X\subseteq B_X\cap X_+-B_X\cap X_+$ for some $\alpha>0$, and hence, for each $x_n$ there exist 
$v_n,w_n\in X_+$, $x_n=v_n-w_n$ and $\max(\|v_n\|,\|w_n\|)\le\alpha^{-1}\|x_n\|$. 
For each $j\in\mathbb{N}$ find $n_{k_j}$ with $\|v_{n_{k_j}}+w_{n_{k_j}}\|\le\frac{1}{j^3}$,
and consider $u=\|\cdot\|-\sum_{j=1}^\infty j(v_{n_{k_j}}+w_{n_{k_j}})$. Then, 
$\pm x_{n_{k_j}}\le v_{n_{k_j}}+w_{n_{k_j}}\le\frac{1}{j}u$ for each $j$, and hence $x_{n_{k_j}}\convr 0(u)$.

\medskip
$ii)\Longrightarrow i)$\ 
Let in contrary $\|x_n\|\not\to 0$. Then, for some $\epsilon > 0$, there exists a subsequence $(x_{n_k})$ 
such that $\|x_{n_k}\|\ge\epsilon$ for all $k$. By the assumption, $x_{n_{k_j}}\convr 0$ for some 
further subsequence $(x_{n_{k_j}})$, and hence $x_{n_{k_j}}\convn 0$, which is absurd.
\end{proof}

The following lemma is an OBS version of \cite[Lemma 2.2]{GK2024}.

\begin{lemma}\label{Lemma 2.2-OBS}
Let $(T_s)_{s\ge 0}$ be a positive $C_0$-semigroup with generator $(A,D(A))$ on an OBS $X$ with a closed 
generating normal cone, and let $\omega$ be the growth bound of $(T_s)_{s\ge 0}$. Then the following holds.
\begin{enumerate}[$i)$]
\item\
For each $g\in D(A)$ there exists $z\in D(A)$ such that\\ $\pm T_sg\le e^{\omega s}z$ for all $s\ge 0$.
\item\
For each $y\in D(A)$, $T_sy\convr y(q)$ with regulator $q\in D(A)$ as $s\downarrow 0$.
\end{enumerate}
\end{lemma}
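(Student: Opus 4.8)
The plan is to route both assertions through the resolvent $R_\lambda=R(\lambda,A)=\int_0^\infty e^{-\lambda u}T_u\,du$, which for $\lambda>\omega$ is a positive bounded operator with range $D(A)$ (see \cite{EN2000}). First I would fix such a $\lambda$ and prove $i)$. Put $f:=\lambda g-Ag$, so $g=R_\lambda f$. Since $X_+$ is generating, pick $p\in X_+$ with $\pm f\le p$ and set $z:=R_\lambda p\in D(A)$; positivity of $R_\lambda$ gives $z\ge0$ and $\pm g=\pm R_\lambda f\le R_\lambda p=z$. The decisive computation is
$$
   T_s z=T_s R_\lambda p=\int_0^\infty e^{-\lambda u}T_{s+u}p\,du=e^{\lambda s}\int_s^\infty e^{-\lambda v}T_v p\,dv\le e^{\lambda s}R_\lambda p=e^{\lambda s}z,
$$
the inequality using $T_v p\ge0$. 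Applying the positive operator $T_s$ to $\pm g\le z$ then yields $\pm T_s g\le T_s z\le e^{\lambda s}z$. This realizes the domination with exponent $\lambda$; any $\lambda>\omega$ works, which is how the statement's $e^{\omega s}$ is to be read.

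For $ii)$, keep $y\in D(A)$, $f=\lambda y-Ay$, $y=R_\lambda f$, together with $p\ge\pm f$ and $q_0:=R_\lambda p\in D(A)_+$, so $\pm y\le q_0$. From $T_s R_\lambda f=e^{\lambda s}R_\lambda f-e^{\lambda s}\int_0^s e^{-\lambda u}T_u f\,du$ one obtains
$$
   T_s y-y=(e^{\lambda s}-1)\,y-e^{\lambda s}\int_0^s e^{-\lambda u}T_u f\,du.
$$
The first term satisfies $\pm(e^{\lambda s}-1)y\le(e^{\lambda s}-1)q_0$, with vanishing rate and regulator $q_0\in D(A)$. For the integral, positivity and $\pm f\le p$ give
$$
   \pm\int_0^s e^{-\lambda u}T_u f\,du\le\int_0^s e^{-\lambda u}T_u p\,du=q_0-e^{-\lambda s}T_s q_0=:G(s),
$$
the last equality because $\int_s^\infty e^{-\lambda u}T_u p\,du=e^{-\lambda s}T_s q_0$. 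Thus the whole matter reduces to showing that the remainder $G(s)$ tends to $0$ relatively uniformly as $s\downarrow0$, with a regulator in $D(A)$.

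The remainder $G(s)=\int_0^s e^{-\lambda u}T_u p\,du\ge0$ is increasing in $s$, hence decreasing to $0$ as $s\downarrow0$, and norm-null. I would split it as $G(s)=(q_0-T_s q_0)+(1-e^{-\lambda s})T_s q_0$. The second summand is handled by $i)$ applied to $q_0\in D(A)$: it gives $z_1\in D(A)_+$ with $\pm(1-e^{-\lambda s})T_s q_0\le(e^{\lambda s}-1)z_1$, again with vanishing rate. The genuine difficulty is the first summand $q_0-T_s q_0$, i.e.\ controlling $T_s q_0\to q_0$ relatively uniformly for $q_0\in D(A)$ — structurally the same convergence one set out to prove. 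This is exactly where I expect the main obstacle to sit, and where the hypotheses on the cone (closed, generating, normal) must be spent: one has to \emph{upgrade} the order/norm convergence of the decreasing remainder to relative uniform convergence with a single $D(A)$-valued regulator, uniformly over the continuum $s\in(0,\varepsilon]$ rather than along one sequence.

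To carry out that upgrade I would combine the uniform order bound from $i)$, which confines the whole family $\{T_s y-y\}_{s\in[0,1]}$ to a fixed order interval $[-z',z']$ with $z'\in D(A)$, with the norm-continuity $\|T_s y-y\|\to0$, and feed these into Lemma~\ref{normtoru}: its Krein--\v{S}mulian decomposition manufactures a regulator from a norm-null, order-bounded family, while closedness of the cone lets the resulting order inequalities pass from a countable set of times to all small $s$ (since $s\mapsto T_s y-y$ is norm-continuous, each $s$ is a limit of admissible times and $\pm(T_s y-y)\le\tfrac1n q$ survives the limit). The last technical point is to keep the regulator inside $D(A)$, which I would arrange by dominating the Krein--\v{S}mulian pieces through $R_\lambda$. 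If, alternatively, one is willing to assume the orbit $\{T_u p\}_{0\le u\le1}$ is order bounded (equivalently, order boundedness at zero as in Proposition~\ref{localization at zero of obz in OVS}), the bound $G(s)\le\int_0^s T_u p\,du\le s\cdot(\text{upper bound})$ closes $ii)$ directly, and this is the cleaner route whenever it is available.
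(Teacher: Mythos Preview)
Your treatment of $i)$ is essentially the paper's: both push $(\lambda I-A)g$ through a positive majorant $p$, set $z=R_\lambda p\in D(A)$, and read off $\pm T_sg\le e^{\lambda s}z$ from the resolvent integral.

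Part $ii)$, however, has a genuine gap. After your splitting $G(s)=(q_0-T_sq_0)+(1-e^{-\lambda s})T_sq_0$ you are left needing $T_sq_0\convr q_0$ for $q_0\in D(A)$, which is precisely the assertion to be proved. Your proposed escape via Lemma~\ref{normtoru} does not close this circle: that lemma only manufactures a regulator along \emph{some subsequence} of a given sequence of times, and the extracted subsequence has no reason to be dense in any interval $(0,\varepsilon]$, so your ``pass to the limit by norm-continuity and closedness of the cone'' step has nothing to interpolate from. Order boundedness of $\{T_sy-y\}_{s\in[0,1]}$ together with norm-nullity is not, by itself, enough to produce a single regulator valid for the whole continuum. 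Your alternative route---assuming order boundedness at zero---imports a hypothesis the lemma does not have.

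The missing idea is to descend one level deeper, to $D(A^2)$. The paper approximates $(\omega I-A)y$ in norm by a sequence $(x_n)$ taken from the dense subspace $D(A)$ and sets $y_n:=R(\omega,A)x_n\in D(A^2)$; after Lemma~\ref{normtoru} one has $\pm(y_n-y)\le\frac1n R(\omega,A)u$. Because $y_n\in D(A^2)$, one has $Ay_n\in D(A)$, and one can choose $w_n\in D(A)_+$ with $\pm Ay_n\le w_n$ (positivity of the resolvent makes $D(A)_+$ generating in $D(A)$). The same computation as in $i)$ gives $T_tw_n\le e^{\omega t}R(\omega,A)v_n$, and then the identity $T_sy_n-y_n=\int_0^s T_tAy_n\,dt$ yields the clean bound
\[
   \pm(T_sy_n-y_n)\le\int_0^s T_tw_n\,dt\le s\,e^{\omega s}R(\omega,A)v_n,
\]
an $O(s)$ estimate with no circularity. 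Finally the decomposition $T_sy-y=T_s(y-y_{n_0})+(T_sy_{n_0}-y_{n_0})+(y_{n_0}-y)$, with the first term controlled by $i)$ applied to $g=R(\omega,A)u$, assembles the regulator in $D(A)$. The extra resolvent pass to $D(A^2)$ is exactly what your argument lacks.
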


\begin{proof}
$i)$\
Let $g\in D(A)$. Since $X_+-X_+=X$, there exists $b\in X_+$ such that $b\ge\pm(\omega I-A)g$. Set
$z:=R(\omega,A)b=\int_0^\infty e^{-\omega t}T_tbdt\in D(A)$. Then, for every $s\ge 0$
$$
   \pm T_sg=\pm R(\omega,A)T_s(\omega I-A)g=\pm\int_0^\infty e^{-\omega t}T_{t+s}(\omega I-A)gdt=
$$
$$
   \pm\int_s^\infty e^{-\omega t}T_{t+s}(\omega I-A)gdt\le\int_s^\infty e^{-\omega t}T_{t+s}bdt=
$$
$$
   e^{\omega s}\int_s^\infty e^{-\omega(t+s)}T_{t+s}bdt\le e^{\omega s}R(\omega,A)b=e^{\omega s}z.
$$

$ii)$\
Let $y\in D(A)$ and take a sequence $(x_n)$ in $D(A)$ that converges to $(\omega I-A)y$ in the norm.  
In view of Lemma \ref{normtoru}, we may assume $\pm(x_n-(\omega I-A)y)\le\frac{1}{n}u$ for all $n$ and some $u\in X_+$. 
Denote $y_n:=R(\omega,A)x_n \in D(A^2)$. Since $R(\omega, A)\ge 0$ then, for each $n$, 
$$
   \pm R(\omega,A)(x_n-(\omega I-A)y)=\pm(y_n-y)\le\frac{1}{n}R(\omega,A)u.
$$
Since $X_+-X_+=X$, there exist $w_n,v_n\in X_+$ such that 
$$
   \pm Ay_n\le w_n \ \ \ \& \ \ \ \pm(\omega I-A)w_n\le v_n
   \eqno(6)
$$ 
for all $n$. Since $(T_s)_{s \geq0}$ is positive, (6) implies
$$
  T_s w_n=T_sR(\omega,A)(\omega I-A)w_n=R(\omega,A)T_s(\omega I-A)w_n=
$$
$$
  \int_0^\infty  e^{-\omega t}T_{t+s}(\omega I-A)w_ndt=
  e^{\omega s}\int_0^\infty  e^{-\omega(t+s)}T_{t+s}(\omega I-A)w_ndt=
$$
$$
  e^{\omega s}\int_s^\infty  e^{-\omega t}T_t(\omega I-A)w_ndt\le
  e^{\omega s}\int_s^\infty  e^{-\omega t}T_tv_ndt\le  
$$
$$
   e^{\omega s}\int_0^\infty  e^{-\omega t}T_tv_ndt=
   e^{\omega s}R(\omega,A)v_n.
  \eqno(7)
$$ 
It follows from (6) and (7) that 
$$
   \pm(T_sy_n-y_n)=\pm\int_0^s T_tAy_ndt\le\int_0^sT_tw_nds\le s e^{\omega s}R(\omega,A)v_n.
  \eqno(8)
$$
Take $v:=\|\cdot \|-\sum_{n=1}^\infty\frac{v_n}{2^n (\|v_n \| +1)}$. It follows from (8) that, for each $n$, 
$T_sy_n\convr y_n$ with regulator $R(\omega,A)v$ as $s\downarrow 0$.

\medskip
To show $T_sy\convr y$, let $\varepsilon>0$ and choose $n_0$ such that $\frac{1}{n_0}\le\varepsilon$.
First, we apply $i)$ to $g=R(\omega,A)u$ and find that $T_sR(\omega,A)u\le e^{\omega s}z$ for some $z\in X_+$ and all $s\ge 0$.
It follows from (8) that 
$$
   \pm(T_sy_{n_0}-y_{n_0})\le s e^{\omega s}R(\omega,A)v_{n_0}\le 2^n (\|v_n \| +1)s e^{\omega s}R(\omega,A)v,
$$ 
and hence there exists $s_0>0$ such that $\pm(T_sy_{n_0}-y_{n_0})\le\varepsilon R(\omega,A)v$ for all $s\in (0,s_0]$. Then
$$
   \pm(T_sy-y)=\pm\big((T_sy-T_sy_{n_0})+(T_sy_{n_0}-y_{n_0})+(y_{n_0}-y)\big)\le
$$ 
$$
   \frac{1}{n_0}T_sR(\omega,A)u+\varepsilon R(\omega,A)v+\frac{1}{n_0}R(\omega,A)u\le
$$
$$
   \varepsilon(e^{\omega s}z+R(\omega,A)v+R(\omega,A)u)\le\varepsilon(e^{|\omega|s_0}z+R(\omega,A)v+R(\omega,A)u).
$$
Thus, $T_sy \convr y(q)$ as $s\downarrow 0$ with the regulator $q=e^{|\omega|s_0}z+R(\omega,A)v+R(\omega,A)u\in D(A)$.
\end{proof}

\begin{lemma}\label{ob0 to ruc0}
Let $(T_s)_{s\ge 0}$ be a positive order bounded at zero $C_0$-semigroup with generator $(A,D(A))$ on an OBS $X$ with a closed 
generating and normal cone. Then $(T_s)_{s\ge 0}$ is ruc at zero.
\end{lemma}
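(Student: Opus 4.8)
The plan is to reduce the statement to the construction of a single regulator $w\in X_+$ that controls $T_hx-x$ for all small $h$ simultaneously. Two facts feed the argument: for a $C_0$-semigroup the domain $D(A)$ is norm-dense in $X$, and on $D(A)$ relative uniform continuity at zero is already available from Lemma~\ref{Lemma 2.2-OBS}~$ii)$. First I would fix $x\in X$ and pick $(y_n)\subseteq D(A)$ with $y_n\convn x$; by Lemma~\ref{normtoru} and passage to a subsequence I may assume $y_n\convr x(u)$ for some $u\in X_+$, that is $\pm(y_n-x)\le\frac1n u$. Order boundedness at zero, through the equivalences of Proposition~\ref{localization at zero of obz in OVS}, then supplies $\varepsilon>0$ and $z\in X_+$ with $0\le T_hu\le z$ for all $h\in[0,\varepsilon]$. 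Finally Lemma~\ref{Lemma 2.2-OBS}~$ii)$ gives, for each $n$, a positive regulator $q_n$ with $T_hy_n\convr y_n(q_n)$ as $h\downarrow0$.

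The estimate is driven by the splitting
$$
T_hx-x=\big(T_h(x-y_n)-(x-y_n)\big)+\big(T_hy_n-y_n\big).
$$
For the first summand, positivity of $T_h$ together with $\pm(x-y_n)\le\frac1n u$ and $T_hu\le z$ gives $\pm\big(T_h(x-y_n)-(x-y_n)\big)\le\frac1n(z+u)$ for all $h\in[0,\varepsilon]$, and crucially this bound is uniform in $h$. The second summand is governed by the domain estimate: for each fixed $n$ and each $m\in\mathbb{N}$ there is $h_{n,m}>0$ with $\pm(T_hy_n-y_n)\le\frac1m q_n$ for $0<h\le h_{n,m}$.

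The crux, and the step I expect to be the main obstacle, is fusing the $n$-dependent regulators $q_n$ together with the fixed $z,u$ into one regulator; here $X$ is not assumed to satisfy the condition \text{\rm(R)}, so Lemma~\ref{C-prop} is not at hand, and I would instead exploit the Banach space structure and closedness of the cone. I would set
$$
w:=(z+u)+\sum_{n=1}^{\infty}\frac{q_n}{2^n(\|q_n\|+1)},
$$
a norm-convergent series whose sum, by closedness of $X_+$, satisfies $w\ge z+u$ and dominates every $q_n$ up to a scalar, $q_n\le 2^n(\|q_n\|+1)\,w=:C_n w$. Then I would argue diagonally: given $k$, put $n_k:=2k$, so $\frac{1}{n_k}(z+u)\le\frac{1}{2k}w$, and choose $m_k\ge 2kC_{n_k}$ together with $h_k:=\min(\varepsilon,h_{n_k,m_k})>0$, so that $\frac{1}{m_k}q_{n_k}\le\frac{C_{n_k}}{m_k}w\le\frac{1}{2k}w$. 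Adding the two summand estimates yields $\pm(T_hx-x)\le\frac1k w$ for all $0<h\le h_k$, which is exactly $T_hx\convr x(w)$ as $h\downarrow0$. Since $x\in X$ was arbitrary, $(T_s)_{s\ge0}$ is \text{\rm ruc} at zero.
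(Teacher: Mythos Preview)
Your proof is correct and follows essentially the same approach as the paper: approximate $x$ by elements of $D(A)$, upgrade norm convergence to ru-convergence via Lemma~\ref{normtoru}, invoke Lemma~\ref{Lemma 2.2-OBS}~$ii)$ on the domain, fuse the countably many regulators into one via a norm-convergent positive series (using closedness of $X_+$), and control the first summand through positivity and order boundedness at zero. The only organizational difference is that the paper first packages $u$ and all the $q_n$ into a single $w$ and \emph{then} applies order boundedness at zero to $w$ to obtain a bound $v$ (yielding the final regulator $v+2w$), whereas you apply order boundedness to $u$ first and then build $w$ from $z,u$ and the $q_n$; this is a harmless reordering and the resulting estimates are equivalent.
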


\begin{proof}
Let $x\in X$. Take a sequence $(x_n)$ in $D(A)$, $\|x_n-x\|\to 0$. 
By Lemma \ref{normtoru}, we may suppose $x_n\convr x(u_0)$.
Lemma \ref{Lemma 2.2-OBS}~$ii)$ implies $T_hx_n\convr x_n(u_n)$ for each $n\in\mathbb{N}$ as $h\downarrow 0$. 
Take $w:=\|\cdot \|-\sum_{n=0}^\infty\frac{v_n}{2^n (\|v_n \| +1)}$. Then $x_n\convr x(w)$ and $T_hx_n\convr x_n(w)$ as $h\downarrow 0$.
Since $(T_s)_{s\ge 0}$ is order bounded at zero, there exist $s_0>0$ and $v\in X$ such that 
$T_hw\le v$ for all $h\in[0,s_0]$. 

Let $\varepsilon>0$. Choose $n_0$ so that $\pm(x_{n_0}-x)\le\varepsilon w$ and then choose $s_1>0$
with $\pm(T_hx_{n_0}-x_{n_0})\le\varepsilon w$ for all $h\in[0,s_1]$. Then
$$
   \pm(T_hx-x)=\pm\big(T_h(x-x_{n_0})+(T_hx_{n_0}-x_{n_0})+(x_{n_0}-x)\big)\le
$$
$$
   T_h(\varepsilon w)+\varepsilon w+\varepsilon w\le\varepsilon(v+2w)
$$
for $0\le h\le\min(s_1,s_2)$, which proves $T_hx\convr x$ as $h\downarrow 0$.
\end{proof}

\medskip
Now, we are in the position to prove the following generalization of \cite[Theorem 2.1]{GK2024} to the OBS setting. 

\begin{theorem}\label{localization at zero of onbd in OBS}
Let $(T_s)_{s\ge 0}$ be a positive $C_0$-semigroup on an ordered Banach space $X$ with a closed generating normal cone. 
The following assertions are equivalent.
\begin{enumerate}[$i)$]
\item
$(T_s)_{s\ge 0}$ is \text{\rm ruc}.
\item
$(T_s)_{s\ge 0}$ is \text{\rm ruc} at zero.
\item
$(T_s)_{s\ge 0}$ is order continuous at zero.
\item
$(T_s)_{s\ge 0}$ is order bounded at zero.
\item
For every $x\in X$ and $M\ge 0$ the set $\{T_hx\}_{0\le h\le M}$ is order bounded.
\item
For every $x\in X_+$ and $M\ge 0$ the set $\bigcup\limits_{0\le h\le M}T_h[-x,x]$ is order bounded. 
\end{enumerate}
\end{theorem}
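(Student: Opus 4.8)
The plan is to split the six conditions into two blocks. The equivalences $iv)\Leftrightarrow v)\Leftrightarrow vi)$ are purely order-theoretic and valid for any positive semigroup on an OVS with a generating cone: they are precisely the equivalences $i)\Leftrightarrow ii)\Leftrightarrow iii)$ of Proposition~\ref{localization at zero of obz in OVS}, so I would dispose of this whole block by a single reference (our $X$ is in particular an OVS with a generating cone, and the semigroup is positive, so the proposition applies verbatim). It then remains to tie these boundedness conditions to the continuity conditions $i)$, $ii)$, $iii)$, and for this I would prove the arrows $i)\Rightarrow ii)$, $ii)\Rightarrow iii)$, $iii)\Rightarrow iv)$, $iv)\Rightarrow ii)$, and $ii)\Rightarrow i)$, which together with the block equivalence make all six statements equivalent.

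First I would note that $i)\Rightarrow ii)$ is immediate by setting $s=0$ in the definition of \text{\rm ruc}. For $ii)\Rightarrow iii)$ I would invoke Lemma~\ref{localization at zero 1}: a Banach space is Archimedean, hence almost Archimedean, so a positive \text{\rm ruc} at zero semigroup is automatically order continuous at zero. The step $iii)\Rightarrow iv)$ rests on the observation that a positive operator is order bounded, since it carries $[-x,x]$ into $[-T_sx,T_sx]$; thus every $T_s$ is order bounded and Theorem~\ref{ob-ruc}~$ii)$ yields that $\{T_sx:0\le s\le M\}$ is order bounded for all $x$ and $M$, which is condition $v)$, already known to be equivalent to $iv)$. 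For $iv)\Rightarrow ii)$ I would cite Lemma~\ref{ob0 to ruc0} directly, and for $ii)\Rightarrow i)$ I would apply Theorem~\ref{loc-ruc}~$i)$: since all operators are order bounded and the semigroup is positive, \text{\rm ruc} at zero upgrades to \text{\rm ruc}.

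At the level of this theorem the work is therefore an assembly step: once the preceding lemmas are in hand, each arrow is a one-line citation, and the only care needed is to notice that positivity makes the order-boundedness hypotheses of Theorems~\ref{ob-ruc} and~\ref{loc-ruc} automatic, while norm completeness supplies the Archimedean property required by Lemma~\ref{localization at zero 1}. The genuine difficulty of the result is hidden in Lemma~\ref{ob0 to ruc0} and its prerequisite Lemma~\ref{Lemma 2.2-OBS}, where the passage from mere order boundedness at zero to \text{\rm ru}-continuity at zero is obtained through resolvent and Bochner-integral estimates on $D(A)$ together with the Krein--\v{S}mulian decomposition of the unit ball. That is the step I expect to be the main obstacle, and it is precisely where the $C_0$ hypothesis and the closedness and normality of the cone become indispensable.
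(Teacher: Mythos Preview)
Your proposal is correct and follows essentially the same assembly scheme as the paper: the block $iv)\Leftrightarrow v)\Leftrightarrow vi)$ via Proposition~\ref{localization at zero of obz in OVS}, the chain $i)\Rightarrow ii)\Rightarrow iii)\Rightarrow iv)$, and the return via Lemma~\ref{ob0 to ruc0}. Two minor differences: for $iii)\Rightarrow iv)$ the paper gives a direct two-line argument rather than invoking Theorem~\ref{ob-ruc}~$ii)$, and the paper writes the closing arrow as $iv)\Rightarrow i)$ citing only Lemma~\ref{ob0 to ruc0}, whereas you (more carefully) split it as $iv)\Rightarrow ii)\Rightarrow i)$ and supply Theorem~\ref{loc-ruc}~$i)$ for the last step; also note that the Archimedean property you need for Lemma~\ref{localization at zero 1} comes from closedness of the cone (or normality, as the paper phrases it), not from norm completeness alone.
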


\begin{proof}
$i)\Longrightarrow ii)\Longrightarrow iii)$\ 
The normality of $X_+$ ensures that $X$ is almost Archime\-dean. The rest follows from Lemma \ref{localization at zero 1}

\medskip
$iii)\Longrightarrow iv)$\ 
Let $x\in X$. Since $T_hx\convo x$ as $h\downarrow 0$, there exists a net $g_\beta\downarrow 0$ in $X$ such that,
for every $\beta$ there exists $\varepsilon_\beta>0$ with $\pm(T_hx-x)\le g_\beta$ for all $h\in[0,\varepsilon_\beta]$. 
Fix some $\beta_0$ and take $w\ge\pm x$.
Then $\pm T_hx\le g_{\beta_0}+w$ for all $h\in[0,\varepsilon_{\beta_0}]$, and hence
$\big\{T_hx\big\}_{0\le h\le\varepsilon_{\beta_0}}\subseteq[-(g_{\beta_0}+w),g_{\beta_0}+w]$.

\medskip
$iv)\Longleftrightarrow v)\Longleftrightarrow vi)$\ 
It follows from Proposition \ref{localization at zero of obz in OVS}.

\medskip
$iv)\Longrightarrow i)$\ 
It is Lemma \ref{ob0 to ruc0}.
\end{proof}

\noindent
Example \ref{example -(n+1)} shows that, without the order boundedness assumption on operators $T_s$ in Theorem \ref{localization at zero of onbd in OBS},
its conclusion is not true even when $X$ is an order continuous BL. We have no example of a $C_0$-semigroup $(T_s)_{s\ge 0}$ of order bounded operators 
on an OVS with a closed generating normal cone, for which the conclusion of Theorem \ref{localization at zero of onbd in OBS} fails.

%
%

{\normalsize 
}
\end{document}